\documentclass{amsart}
\usepackage{amssymb}
\usepackage[foot]{amsaddr}
\usepackage{stmaryrd} 
\usepackage{amsmath} 
\usepackage{amscd}
\usepackage[toc]{appendix}
\usepackage{amsthm}
\usepackage{amsbsy}
\usepackage{mathtools}
\usepackage{tablefootnote}
\usepackage{multirow}
\usepackage[normalem]{ulem}
\useunder{\uline}{\ul}{}
\usepackage[margin=1.2 5in]{geometry}
\usepackage{commath, longtable}
\usepackage{comment, enumerate}
\usepackage{geometry}
\usepackage[matrix,arrow]{xy}
\usepackage{hyperref}
\usepackage{mathrsfs}
\usepackage{ulem}
\usepackage{color}
\usepackage{float}
\usepackage{mathtools,caption}
\usepackage[table,dvipsnames]{xcolor}
\usepackage{tikz-cd}
\usepackage{longtable}
\usepackage[utf8]{inputenc}
\usepackage[OT2,T1]{fontenc}
\usepackage[normalem]{ulem}
\usepackage[customcolors]{hf-tikz}
\usepackage[table]{xcolor}
\usepackage{enumitem}
\newlist{primenumerate}{enumerate}{1}
\setlist[primenumerate,1]{label={\arabic*$'$}}
\hypersetup{
 colorlinks=true,
 linkcolor=DarkOrchid,
 filecolor=blue,
 citecolor=olive,
 urlcolor=orange,
 pdftitle={HK Watkin's project},
 }
\usepackage{booktabs}

\DeclareSymbolFont{cyrletters}{OT2}{wncyr}{m}{n}
\DeclareMathSymbol{\Sha}{\mathalpha}{cyrletters}{"58}

\newtheorem{theorem}{Theorem}[section]
\newtheorem{Lemma}[theorem]{Lemma}

\newtheorem{proposition}[theorem]{Proposition}
\newtheorem{corollary}[theorem]{Corollary}
\newtheorem{definition}[theorem]{Definition}

\numberwithin{equation}{section}

\theoremstyle{remark}
\newtheorem{remark}[theorem]{Remark}
\newtheorem{example}[theorem]{Example}

\newcommand{\EC}{\mathsf{E}}
\newcommand{\rk}{\operatorname{rk}}

\newcommand{\R}{\mathbb{R}}
\newcommand{\Zp}{\mathbb{Z}_p}

\newcommand{\Z}{\mathbb{Z}}
\newcommand{\Q}{\mathbb{Q}}

\newcommand{\Sel}{\mathrm{Sel}}
\newcommand{\ord}{\mathrm{ord}}

\makeatletter
\theoremstyle{plain} 
\newtheorem*{intr@thm}{\intr@thmname}

\newtheorem*{c@njecture}{\conjn@name}
\newcommand{\myl@bel}[2]{
 \protected@write \@auxout {}{\string \newlabel {#1}{{#2}{\thepage}{#2}{#1}{}} }
 \hypertarget{#1}{}
 } 

\makeatother

\makeatletter
\newcommand{\mylabel}[2]{#2\def\@currentlabel{#2}\label{#1}}
\makeatother

\title[Elliptic curves of conductor $2^m p$, quadratic twists, and Watkins' conjecture]{Elliptic curves of conductor $2^m p$, quadratic twists, and Watkins' conjecture}

\author[J.~Hatley]{Jeffrey Hatley}
\address[Hatley]{
Department of Mathematics\\
Union College\\
Bailey Hall 202\\
Schenectady, NY 12308 USA\\ 
hatleyj@union.edu}

\author[D.~Kundu]{Debanjana Kundu}
\address[Kundu]{Department of Mathematics and Statistics\\ University of Regina \\ 3737 Wascana Pkwy\\ Regina, SK S4S 0A2 Canada\\ debanjana.kundu@uregina.ca}

\keywords{elliptic curves, rank, Watkins conjecture}
\subjclass[2020]{Primary 11G05}

\begin{document}

\begin{abstract}
Let $\EC/\mathbb{Q}$ be an elliptic curve.
By the modularity theorem, it admits a surjection from a modular curve $X_0(N) \to \EC$, and the minimal degree among such maps is called the \textit{modular degree} of $\EC$.
By the Mordell--Weil Theorem, $\EC(\Q)\simeq \mathbb{Z}^r \oplus T$ for some nonnegative integer $r$ and some finite group $T$.
Watkins' Conjecture predicts that $2^r$ divides the modular degree, thus suggesting an intriguing link between these geometrically- and algebraically-defined invariants.
We offer some new cases of Watkins' Conjecture, specifically for elliptic curves with additive reduction at $2$, good reduction outside of at most two odd primes, and a rational point of order two.

\medspace

\noindent\textsc{Résumé.}
Soit $\EC/\Q$ une courbe elliptique. D'après le théorème de modularité, il existe une surjection d'une courbe modulaire $X_0(N) \rightarrow \EC$, et le degré minimal parmi de telles applications est appelé le \textit{degré modulaire} de $\EC$. D'après le théorème de Mordell--Weil, on a $\EC(\Q) \simeq \Z^r \oplus T$ pour un entier $r \geq 0$ et un groupe fini $T$. La conjecture de Watkins prédit que $2^r$ divise le degré modulaire, ce qui suggère un lien intrigant entre ces invariants, qui sont définis géométriquement et algébriquement, respectivement. Nous présentons de nouveaux cas de la conjecture de Watkins, en particulier pour les courbes elliptiques ayant réduction additive en $2$, bonne réduction en dehors d'au plus deux nombres premiers impairs, et un point rationnel d'ordre deux.

\end{abstract}

\maketitle

\section{Introduction} 
The celebrated Modularity Theorem \cite{Wil95, TW95, breuil2001modularity} assures us that, given any elliptic curve $\EC/\Q$, there is a modular parameterization 
\begin{equation}\label{eq:modular-param}
X_0(N) \xrightarrow{\phi_{\EC}} \EC
\end{equation}
where $X_0(N)$ is the modular curve of level $N=\mathrm{conductor}(\EC)$.
Among all such maps, the least degree is called the \textit{modular degree} of $\EC$, which we denote by $m_\EC$.
The arithmetic significance of $m_\EC$ has been the subject of much research; for instance, its prime divisors are closely related to the congruence primes of the modular form $f_{\EC}$ associated to $\EC$ \cite{agashe2012modular} .

A famous conjecture of M.~Watkins \cite{watkins2002computing} predicts that $\ord_2(m_\EC)$ is bounded below by the Mordell--Weil rank of $\EC(\Q)$.
Much progress has been made on this problem, especially in the case when $m_\EC$ is odd (in which case Watkins' conjecture implies that $\EC(\Q)$ is finite); see for example \cite{CE09,KKwatkins,KKwatkins-corrigendum,yazdani}.

When $m_\EC$ is not assumed to be odd, much progress has been made in proving Watkins' Conjecture in quadratic twist families of elliptic curves.
For instance, in \cite{E-LP}, the authors show that if $\EC(\Q)[2]\neq 0$, then Watkins' conjecture holds for quadratic twists of $\EC$ by square-free integers with sufficiently many prime divisors.
In \cite{CaroPasten} the authors establish Watkins' conjecture for many semi-stable elliptic curves with $\EC(\Q)[2]\neq 0$ under some additional restrictions on the primes of split and non-split multiplicative reduction.
Most recently, J.~Caro \cite{caro2022watkins} proves that if $\EC$ is an elliptic curve with prime-power conductor and $\EC(\Q)[2]\neq0$, then Watkins' Conjecture holds for any quadratic twist of $\EC$.

Note that Watkins' conjecture predicts that $2 \mid m_\EC$ whenever $\EC(\Q)$ is infinite.
As explained in \cite{caro2022watkins}, the only missing case for this weaker conjecture is the case when $N$ is divisible by at most two odd primes, $\EC$ has additive reduction at $2$, and $\EC(\Q)[2]$ is nontrivial; see Appendix~\ref{appendix:clarify-caro} for a discussion of this claim.
The results of \cite{caro2022watkins} cover the case when $\EC$ has additive reduction at $2$ and at the odd primes, but it does not allow for any odd primes of multiplicative reduction.

The goal of this paper is to complement the existing work in this area by establishing Watkins' conjecture for many elliptic curves with additive reduction at $2$, bad reduction at no more than 2 odd primes, and $\EC(\Q)[2] \neq 0$.
To avoid overlapping with \cite{caro2022watkins}, we study elliptic curves with \textit{multiplicative} reduction at one odd prime.
We note that the curves we consider often have \textit{split} multiplicative reduction at an odd prime, so they are not covered by the main result of \cite{CaroPasten}.

We now summarize our results.

\begin{itemize}
\item We begin with the complete classification, due to W.~Ivorra, of elliptic curves of conductor $2^m p$ for $m \geq 2$ and $p$ an odd prime.
In Theorem~\ref{thm:rank-bound-for-ivorra-curves}, we prove that many curves with conductor $2^m p$ have rank at most 1, including all the curves with $2 \leq m \leq 5$.
In Corollary~\ref{cor:watkins-holds-for-ivorra-type}, we show that the rank bound implies Watkins' Conjecture for these curves, assuming either the Birch and Swinnerton-Dyer Conjecture or the finiteness of Shafarevich--Tate groups.
\item We then study quadratic twists of Ivorra curves, which have conductor $2^m p q^2$, and deduce Watkins' conjecture for many of these twists by bounding their ranks and using properties of the Petersson norm; see Section~\ref{sec:petterson-norm} and in particular Theorem~\ref{watkin's for twist using petterson norm}.
\item An appendix gives the full details of the proof of Theorem~\ref{thm:rank-bound-for-ivorra-curves}.
In the main body of the paper, we prove just one case in order to streamline the paper for the reader's convenience.
\end{itemize}

It is likely possible to extend these methods to study the remaining elliptic curves of conductor $2^m p^a q^b$, for instance those with conductor $2^m p q$, using the results of \cite{Mulholland_2006}.

\section*{Acknowledgements} The authors thank Daniel Kriz for answering their questions, and the referee for helpful suggestions which improved the paper. We also thank Antonio Lei for proofreading our French abstract. 
Partial support for this research was provided to the first-named author by an AMS--Simons Research Enhancement Grant for Primarily Undergraduate Institution Faculty.
DK was supported by an AMS--Simons early career travel grant.

\section{Ivorra's classification}
By \cite{LRS-conductor-bound}, we know that there are no elliptic curves ${\EC}/\Q$ with conductor divisible by $2^9$.
Curves with rational $2$-torsion and conductor $2^mp$, with $1 \leq m \leq 8$ an integer and $p \geq 29$ a prime, were classified by Ivorra \cite{ivorra2004courbes}.
Such curves come in $2$-isogenous pairs, and their Weierstrass forms all fall into certain families.

We summarize his results in the following theorem, omitting a few cases, as we now explain.
First, we omit the case $m=1$ since we are concerned with elliptic curves with \textit{additive reduction} at $2$.
Next, we omit the single pair of curves of conductor $8 \cdot 31$ described in \cite[Th\'{e}or\`{e}me 3 (4)]{ivorra2004courbes}, since it is easy to verify anything one wishes about a particular elliptic curve by e.g. looking them up on \cite{lmfdb}.
Finally, we omit the curves belonging to an isogeny class of size $4$ (see parts (3) and (6) of Th\'{e}or\`{e}me 4 and parts (4) and (6) of Th\'{e}or\`{e}me 6  of \cite{ivorra2004courbes}); this is in order to give a more uniform treatment to the curves we do consider.

We sort the remaining curves into convenient families, described below, which are amenable to explict $2$-descent arguments.
Each family is introduced in the order of first appearance in the statements \cite[Th\'{e}or\`{e}mes 3--8]{ivorra2004courbes}.
The exact correspondence is given in Table~\ref{table:dictionary} in Appendix~\ref{appendix-dictionary}.

We use the convention of \emph{op.~cit}. that, if an integer $n$ is a perfect square, then 
\[
\begin{cases}
\sqrt{n} \equiv 1 \pmod 4 & \text{if $n$ is odd}\\ 
\sqrt{n} \geq 0 & \text{if $n$ is even}.
\end{cases}
\]
Write $\omega(n)$ to denote the number of prime divisors of $n$.
We record some values of $\omega$ in the following theorem because it will be useful for our purposes.
Let $f \colon \mathbb{N} \to \mathbb{N}$ denote the function defined by 
\[
f(n) = \begin{cases}
18 + 2 \log_2(n) & \text{if}\ n <2^{96}\\
435 + 10 \log_2(n) & \text{if}\ n \geq 2^{96}.
\end{cases}
\]

\begin{theorem}
\label{thm:general-ivorra-type}
Let $p \geq 29$ be a prime and $k \geq 2$ an integer.
Suppose $\beta \in \Z$ is a perfect square, and let $\alpha = \sqrt{\beta}$.
There exist isogenous elliptic curves ${\EC}/\Q$ and ${\EC}'/\Q$ with ${\EC}(\Q)[2] \simeq {\EC}'(\Q)[2] \simeq \Z/2$ and conductor $2^m p$ of the form
\begin{align*}
{\EC} \colon y^2 & =x^3 + ax^2 + bx\\
{\EC}' \colon y^2 &= x^3 - 2ax^2 + (a^2-4b)x.
\end{align*}
in the cases described by Table~\ref{table:Ivorra-type-classification}.

\begin{table}[h!]
    \centering
    \begin{tabular}{| c || c | c | c | c | c | c | c | c |}
         \hline
         \textrm{Label/Type} & $\beta$ & $a$ & $b$  & $\omega(a^2-4b)$ & $\omega(b)$ & bound on $k$ & possible $m$\\
         \hline   
         \hline            
         $\mathbf{I}$ & $p-2^k$ & $\pm \alpha$ & $\ -2^{k-2}$ & $\omega(p)=1$ & $\leq1$ &$2 \leq k \leq 5$\tablefootnote{More precisely, for Type \textbf{I} we have $k \in \{2,4,5\}$ for $+\alpha$ and $k \in \{2,3\}$ for $-\alpha$.} & 2,3,4,5\\
         \hline
         $\mathbf{II}$ & $p+2^k$ & $\pm \alpha$ & $\ 2^{k-2}$ & $\omega(p)=1$ & $ \leq 1$&$k \in \{3,5\}$\tablefootnote{More precisely, for Type \textbf{II}, $-\alpha$ is only permitted if $k=3$.} & 3,5\\
         \hline
         $\mathbf{III}$ & $p-2^k$ & $-\alpha$ & $\ -2^{k-2}$ & $\omega(p)=1$ & $ \leq 1$&$4 \leq k \leq f(p)$ & 4\\
         \hline
         $\mathbf{IV}$ & $p+2^k$ & $-\alpha$ & $\ 2^{k-2}$ & $\omega(p)=1$ & $\leq 1$&$4 \leq k \leq f(p)$ & 4\\
         \hline
         $\mathbf{V}$ & $2^k-p$ & $-\alpha$ & $\ 2^{k-2}$ & $\omega(-p)=1$ & $\leq  1$&$4 \leq k \leq f(p)$ & 4\\
         \hline
         $\mathbf{VI}$ & $p-1$ & $\pm 2\alpha$ & $\ -1$ & $\omega(4p)=2$ & $0$&  & 5\\
         \hline
         $\mathbf{VII}$ & $p-1$ & $\pm 2\alpha$ & $\ p$ & $\omega(-4)=1$ & $1$& & 6\\
         \hline
         $\mathbf{VIII}$ & $p-2^k$ & $\pm 2\alpha$ & $\ -2^k$ & $\omega(4p)=2$ & $1$&$2 \leq k \leq f(p)$ & 6\\
         \hline
         $\mathbf{IX}$ & $p+2^k$ & $\pm 2\alpha$ & $\ 2^k$ & $\omega(4p)=2$ & $1$&$2 \leq k \leq f(p)$ & 6\\ 
         \hline
         $\mathbf{X}$ & $2^k-p$ & $\pm 2\alpha$ & $\ 2^k$ & $\omega(-4p)=2$ & $1$&$2 \leq k \leq f(p)$ & 6\\
         \hline
         $\mathbf{XI}$ & $2p^k-1$ & $\pm 2\alpha$ & $\ -1$ & $\omega(8p^k)=2$ & $0$&$k \in \{1,2\}$ & 7\\
         \hline
         $\mathbf{XII}$ & $2p^k-1$ & $\pm 2\alpha$ & $\ 2p^k$ & $\omega(-4)=1$ & $2$&$k \in \{1,2\}$ & 7\\
         \hline
         $\mathbf{XIII}$ & $p^k+2$ & $\pm 2\alpha$ & $\ p^k$ & $\omega(8)=1$ & $1$&$1 \leq k \leq 164969$ & 7\\
         \hline
         $\mathbf{XIV}$ & $p^k+2$ & $\pm 2\alpha$ & $\ 2$ & $\omega(4p^k)=2$ & $1$&$1 \leq k \leq 164969$ & 7\\
         \hline
         $\mathbf{XV}$ & $p-2$ & $\pm 2\alpha$ & $\ p$ & $\omega(-8)=1$ & $1$&  & 7\\
         \hline
         $\mathbf{XVI}$ & $p-2$ & $\pm 2\alpha$ & $\ -2$ & $\omega(4p)=2$ & $1$&   & 7\\
         \hline
         $\mathbf{XVII}$ & $(p^k-1)/2$ & $\pm 4\alpha$ & $\ -2$ & $\omega(8p^k)=2$ & $1$&$k \in \{1,2\}$& 8\\
         \hline
         $\mathbf{XVIII}$ & $(p^k-1)/2$ & $\pm 4\alpha$ & $\ 2p^k$ & $\omega(-8)=1$ & $2$&$k \in \{1,2\}$ & 8\\
         \hline
         $\mathbf{XIX}$ & $(p^k+1)/2$ & $\pm 4\alpha$ & $\ 2$ & $\omega(8p^k)=2$ & $1$&$k \in \{1,2\}$ & 8\\
         \hline
         $\mathbf{XX}$ & $(p^k+1)/2$ & $\pm 4\alpha$ & $\ 2p^k$ & $\omega(8)=1$ & $2$& $k \in \{1,2\}$ & 8\\
         \hline
    \end{tabular}\caption{Our classification of most of the elliptic curves described in \cite{ivorra2004courbes}.}
    \label{table:Ivorra-type-classification}
\end{table}

\end{theorem}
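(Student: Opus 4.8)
The plan is to regard this theorem as a repackaging of Ivorra's classification \cite{ivorra2004courbes} into a single uniform Weierstrass shape; accordingly the argument combines (i) a citation of Ivorra for existence and completeness, (ii) the classical theory of the $2$-isogeny attached to a rational point of order two, and (iii) a one-line computation of $\omega(a^2-4b)$ and $\omega(b)$ for each family. First I would recall that a curve ${\EC} \colon y^2 = x^3 + ax^2 + bx$ carries the rational $2$-torsion point $(0,0)$, and that its quotient by this subgroup is exactly ${\EC}' \colon y^2 = x^3 - 2ax^2 + (a^2-4b)x$, with the dual isogeny returning to ${\EC}$. This is precisely the isogenous pair in the statement, so the existence of the isogeny and of a point of order two on each curve is immediate and needs no case analysis.

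Next I would run through \cite[Th\'{e}or\`{e}mes 3--8]{ivorra2004courbes}. Each of these lists explicit Weierstrass equations for the curves of conductor $2^m p$ possessing a rational point of order two; translating that point to $(0,0)$ by an admissible change of variables puts the equation into the form $y^2 = x^3 + ax^2 + bx$. Since admissible transformations preserve the isomorphism class, the conductor remains $2^m p$, and $a$ and $b$ can simply be read off. I would record this matching once and for all in Table~\ref{table:dictionary}, where the bulk of the labour resides; the admissible values of $m$ and the stated bounds on $k$ are then inherited directly from the corresponding entries of Ivorra's theorems.

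The verification of the two $\omega$-columns is genuinely uniform. Writing $a = \pm c \alpha$ with $c \in \{1,2,4\}$ and $\alpha^2 = \beta$, one has $a^2 - 4b = c^2\beta - 4b$, and in every row this collapses to one of $p$, $-p$, $\pm 4p$, $-4$, $\pm 8$, $8p^k$, or $4p^k$; the number of prime divisors is then read off directly to give the $\omega(a^2-4b)$ column, while $\omega(b)$ is immediate from the displayed value of $b$. For the torsion structure I would use that ${\EC}(\Q)[2] \simeq \Z/2$ exactly when $x^2 + ax + b$ is irreducible over $\Q$, i.e.\ when $a^2-4b$ is not a perfect square, and dually that ${\EC}'(\Q)[2] \simeq \Z/2$ requires the discriminant $16b$ of the quadratic factor of ${\EC}'$ to be a non-square, i.e.\ $b$ a non-square.

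The main obstacle is precisely this last torsion check together with the completeness of the dictionary. Several of the values above are perfect squares for certain parities --- for instance $4p^k$ when $k$ is even, or $b = 2^k$ and $b = 2p^k$ for appropriate $k$ --- and in exactly those cases one of ${\EC}$, ${\EC}'$ acquires full rational $2$-torsion and the pair sits inside an isogeny class of size $4$, namely the classes of \cite[Th\'{e}or\`{e}mes 4, 6]{ivorra2004courbes} that we have deliberately omitted. I therefore expect the delicate step to be confirming, family by family, that the retained curves genuinely form isogeny classes of size $2$ --- so that both $a^2-4b$ and $b$ are non-squares and the torsion is $\Z/2$ as claimed --- and that the omitted size-$4$ classes have been correctly excised. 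This cross-checking, rather than any individual computation, is the crux, and it is carried out in the dictionary of Appendix~\ref{appendix-dictionary}.
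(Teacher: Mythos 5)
Your proposal matches the paper's treatment: the paper gives no independent proof of Theorem~\ref{thm:general-ivorra-type}, presenting it exactly as you do --- as a re-sorting of Ivorra's Th\'{e}or\`{e}mes 3--8 into the uniform shape $y^2 = x^3 + ax^2 + bx$ via the standard rational $2$-isogeny with kernel $\{\mathcal{O},(0,0)\}$, with the $\omega$-columns read off directly from the displayed values of $a^2-4b$ and $b$, and with the family-by-family correspondence recorded in Table~\ref{table:dictionary} of Appendix~\ref{appendix-dictionary}. Your closing observation --- that the claim $\EC(\Q)[2]\simeq\EC'(\Q)[2]\simeq\Z/2$ amounts to $a^2-4b$ and $b$ being non-squares, which is exactly what is guaranteed by excising the size-$4$ isogeny classes of Ivorra's Th\'{e}or\`{e}mes 4 and 6 --- is precisely the role of the omissions the paper announces just before the theorem statement.
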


\begin{definition}
We refer to any of the curves arising in Theorem~\ref{thm:general-ivorra-type} as \textbf{Ivorra Curves}.
\end{definition}

\begin{remark}
For an explicit correspondence between our classification types and the curves listed in \cite{ivorra2004courbes}, please see Table~\ref{table:dictionary} in Appendix~\ref{appendix-dictionary}.
\end{remark}

\begin{remark}
Sage \cite{sage} code for computing Ivorra curves can be found at \cite{ivorra-sage-code}.
\end{remark}

\section{Bounding ranks of Ivorra curves}

\subsection{}
We begin by providing an upper bound for the algebraic ranks of the elliptic curves arising from Theorem~\ref{thm:general-ivorra-type}.
First, recall that for $2$-isogenous elliptic curves
\begin{align*}
{\EC} \colon y^2 & =x^3 + ax^2 + bx\\
{\EC}' \colon y^2 &= x^3 - 2ax^2 + (a^2-4b)x,
\end{align*}
we have $r=\mathrm{rank}_\Z {\EC}(\Q) = \mathrm{rank}_\Z {\EC}'(\Q)$.
The general method of $2$-descent provides the following naive upper bound on $r$.

\begin{proposition}\label{prop:naive-rank-bound-aguirre}
Keep the notation introduced above.
Then
\begin{equation}\label{eq:naive-rank-bound}
r \leq \omega(a^2-4b) + \omega(b)-1.
\end{equation}
More generally, let $\EC/\Q$ be any elliptic curve with a non-trivial point of $2$-torsion and let $a$ (resp. $m$) be number of primes of additive (resp. multi-
plicative) bad reduction of $\EC/\Q$. Then:
\begin{equation}\label{eq:naive-rank-bound-additive-mult}
r \leq m + 2a - 1.
\end{equation}
\end{proposition}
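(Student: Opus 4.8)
The plan is to run descent through the rational $2$-isogeny $\phi\colon\EC\to\EC'$ whose kernel is generated by the $2$-torsion point $(0,0)$, together with its dual $\hat\phi\colon\EC'\to\EC$. Writing $\alpha,\alpha'$ for the associated connecting homomorphisms
\[
\alpha\colon \EC(\Q)/\hat\phi\big(\EC'(\Q)\big)\hookrightarrow \Q^{\times}/(\Q^{\times})^2,\qquad \alpha'\colon \EC'(\Q)/\phi\big(\EC(\Q)\big)\hookrightarrow \Q^{\times}/(\Q^{\times})^2,
\]
which on affine points with $x\neq0$ send $(x,y)\mapsto x$, Silverman's complete $2$-descent yields the exact rank formula $r=\dim_{\mathbb{F}_2}\operatorname{Im}(\alpha)+\dim_{\mathbb{F}_2}\operatorname{Im}(\alpha')-2$. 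The first step is the standard factorization argument on $y^2=x(x^2+ax+b)$: if an odd prime $\ell$ divides the squarefree part of $x$ then $\ell\mid b$, so $\operatorname{Im}(\alpha)$ is supported on $-1$ and the primes dividing $b$, and symmetrically $\operatorname{Im}(\alpha')$ on $-1$ and the primes dividing $a^2-4b$. This already gives the crude bound $r\le\omega(b)+\omega(a^2-4b)$.

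To extract the extra $-1$ of \eqref{eq:naive-rank-bound} I would examine the archimedean place and rule out $-1$ from one of the two images. If $a^2-4b<0$ the quadratic $x^2+ax+b$ is positive definite, so every real point of $\EC$ has $x\ge0$ and $-1\notin\operatorname{Im}(\alpha)$; if $a^2-4b>0$ but $b<0$ the same reasoning applied to $\EC'$ (for which the analogous discriminant is $16b<0$) gives $-1\notin\operatorname{Im}(\alpha')$. In the remaining case $a^2-4b>0$ and $b>0$ (note $a\neq0$ here), the two roots of one of the quadratics $x^2+ax+b$ (if $a<0$) or $x^2-2ax+(a^2-4b)$ (if $a>0$) are both positive, again forcing $x\ge0$ on the corresponding real locus and deleting $-1$ from that image. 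In every case at least one image omits $-1$, so $\dim\operatorname{Im}(\alpha)+\dim\operatorname{Im}(\alpha')\le\omega(b)+\omega(a^2-4b)+1$, which is \eqref{eq:naive-rank-bound}.

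For the general inequality \eqref{eq:naive-rank-bound-additive-mult} I would place $\EC$ in the form $y^2=x^3+ax^2+bx$ with $(a,b)$ integral and minimal and translate divisibility into reduction type: for odd $\ell$, good reduction means $\ell\nmid b(a^2-4b)$, while multiplicative (resp.\ additive) reduction means $\ell$ divides exactly one (resp.\ both) of $b$ and $a^2-4b$, since $\ell\mid\gcd(b,a^2-4b)$ forces $\ell\mid a$ and hence a cusp, whereas dividing exactly one leaves a node. Thus each odd multiplicative prime contributes $1$ and each odd additive prime contributes $2$ to $\omega(b)+\omega(a^2-4b)$; feeding this into \eqref{eq:naive-rank-bound} together with the contribution of $2$ gives $r\le m+2a-1$.

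The step I expect to be the main obstacle is the prime $2$. The Weierstrass form $y^2=x^3+ax^2+bx$ is singular modulo $2$, so $2$ can inflate $\omega(b)+\omega(a^2-4b)$ by as much as $2$ even when $\EC$ has good reduction at $2$ (which does occur for curves of odd conductor carrying a rational $2$-torsion point), and the crude model count then overshoots. To close this case cleanly one should pass from the global images to the $\phi$- and $\hat\phi$-Selmer groups and analyze the local image at $2$, showing that a prime of additive reduction contributes at most $2$ to the Selmer dimensions, a multiplicative prime at most $1$, and a good prime nothing, so that the archimedean constraint of the second paragraph still supplies the final $-1$.
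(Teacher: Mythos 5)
Your argument splits into two halves, and they fare very differently. Note first that the paper itself offers no proof of this proposition: it is quoted verbatim from \cite[Proposition~1.1]{aguirre2008elliptic}, so any self-contained argument is necessarily a different route from the paper's. Your proof of the first bound \eqref{eq:naive-rank-bound} is correct and complete: the rank formula $r=\dim_{\mathbb{F}_2}\operatorname{Im}(\alpha)+\dim_{\mathbb{F}_2}\operatorname{Im}(\alpha')-2$ for descent via the $2$-isogeny with kernel $\langle(0,0)\rangle$ (this is isogeny descent, not ``complete $2$-descent,'' but the formula you quote is the right one and holds whether the rational $2$-torsion is $\Z/2$ or $(\Z/2)^2$), the support statement for the two images, and the sign trichotomy --- $a^2-4b<0$; $a^2-4b>0$, $b<0$; $a^2-4b>0$, $b>0$ split according to the sign of $a$ --- which removes $-1$ from at least one of the two images, are all verified correctly, and the last step is a clean way to harvest the $-1$.

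The second bound \eqref{eq:naive-rank-bound-additive-mult} is where you have a genuine gap, and it is exactly the one you flag: everything at the prime $2$ is deferred to a Selmer-group analysis that you describe but never perform, and that analysis is the actual content of \eqref{eq:naive-rank-bound-additive-mult} beyond \eqref{eq:naive-rank-bound}. The problem is not cosmetic. Any model $y^2=x^3+ax^2+bx$ has $c_4=16(a^2-3b)$, hence $\operatorname{ord}_2(c_4)\geq 4$; since $c_4$ scales by $u^4$, such a model is \emph{never} minimal at $2$ when $\EC$ has good or multiplicative reduction at $2$, and $2$ can unavoidably divide $b$. Concretely, the conductor-$15$ curve $y^2=x^3-7x^2+16x$ has good reduction at $2$, but $b=16$ and $a=-7$ is odd, so no rescaling removes the factor of $2$; your odd-prime dictionary plus \eqref{eq:naive-rank-bound} then yields only $r\leq m+2a$ for this curve, not $r\leq m+2a-1$ (and when $2$ divides both $b$ and $a^2-4b$ the overshoot can be $2$). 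To close this you must genuinely prove that classes of odd $2$-adic valuation cannot lie in the relevant image(s) when $2$ is a prime of good or multiplicative reduction --- for instance by a congruence computation modulo $8$ on $x$-coordinates of $\Q_2$-points, using the constraint $b^2(a^2-4b)=2^8\cdot(\text{odd unit})$ forced by good reduction at $2$ --- and the sentence ``one should pass to the Selmer groups and analyze the local image at $2$'' asserts the conclusion rather than establishing it. (By contrast, your odd-prime dictionary is fine once ``minimal'' is made precise as: no prime $\ell$ satisfies $\ell^2\mid a$ and $\ell^4\mid b$; with that normalization the model really is minimal at every odd prime, and your good/multiplicative/additive trichotomy at odd $\ell$ holds.)
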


\begin{proof}
This is \cite[Proposition~1.1]{aguirre2008elliptic}.
\end{proof}

It follows immediately that we have $r \leq 1$ for the elliptic curves of Types \textbf{I} through \textbf{VII} and also \textbf{XI}, \textbf{XIII}, and \textbf{XV}.
For the rest of the families, we get $r \leq 2$, but by actually performing the $2$-descent, we can \textit{often} improve this bound to $r \leq 1$.
While the proof of the following theorem is a bit tedious, we have made every effort to streamline the arguments and their presentations.

\begin{theorem}
\label{thm:rank-bound-for-ivorra-curves}
Let ${\EC}$ and ${\EC}'$ be a pair of elliptic curves of type $T$ arising in Theorem~\ref{thm:general-ivorra-type} and let $r=\mathrm{rank}_\Z {\EC}(\Q) = \mathrm{rank}_\Z {\EC}'(\Q)$.
Then $r \leq 1$ in the following cases.
\begin{enumerate}
\item[\textup{(1)}]  $T \in \{ \mathbf{I, II, III, IV, V, VI, VII, XI, XIII, XV}\}$\label{thm:case-easy}
\item[\textup{(2)}] $T =\mathbf{VIII}$ and $k=2$\label{thm:VIIIk2}
\item[\textup{(3)}]  $T =\mathbf{X}$
\label{thm:case-X} 
\item[\textup{(4)}] $T=\mathbf{XII}$ with $k=2$ and $-64$ is not a fourth power mod $p$
\item[\textup{(5)}] $T=\mathbf{XVI}$\label{thm:case-XVI}
\item[\textup{(6)}] $T=\mathbf{XVII}$ with $k=1$ and $p \equiv 3 \pmod 8$\label{thm:case-XVII} 
\item[\textup{(7)}] $T=\mathbf{XVIII}$ with $k=1$
\label{thm:case-XVIII} 
\item[\textup{(8)}] $T=\mathbf{XIX}$ and $p \not\equiv 1 \pmod 8$.
\label{thm:case-XIX} 
\end{enumerate}
In fact, in case (6), the rank is always 0.
\end{theorem}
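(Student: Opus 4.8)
The plan is to treat case (1) by a direct appeal to the naive bound, and cases (2)--(8) by an explicit $2$-isogeny descent that improves that bound by one further factor of $2$.

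For case (1) I would simply substitute the values of $\omega(a^2-4b)$ and $\omega(b)$ recorded in Table~\ref{table:Ivorra-type-classification} into the bound \eqref{eq:naive-rank-bound}. For each listed type one has $\omega(a^2-4b)+\omega(b)-1\le 1$: in types $\mathbf{I}$--$\mathbf{V}$, $\mathbf{VII}$, $\mathbf{XIII}$ and $\mathbf{XV}$ the quantity $a^2-4b$ has a single prime factor while $\omega(b)\le 1$, and in types $\mathbf{VI}$ and $\mathbf{XI}$ one has $\omega(b)=0$ against $\omega(a^2-4b)=2$. So case (1) requires no descent.

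For the remaining types the naive bound only gives $r\le 2$, and the goal is to remove one more power of $2$. I would run the descent by the $2$-isogeny $\phi\colon\EC\to\EC'$ and its dual $\hat\phi\colon\EC'\to\EC$, via the connecting homomorphisms
\[
\delta\colon \EC(\Q)/\hat\phi\bigl(\EC'(\Q)\bigr)\hookrightarrow \Q^\times/(\Q^\times)^2,\qquad
\delta'\colon \EC'(\Q)/\phi\bigl(\EC(\Q)\bigr)\hookrightarrow \Q^\times/(\Q^\times)^2,
\]
where $\delta$ sends $(x,y)$ with $x\ne0$ to the class of $x$ and sends $(0,0)$ to the class of $b$, and $\delta'$ does the same with $a^2-4b$ in place of $b$. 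These fit into the rank formula
\[
2^{\,r+2}=\bigl|\operatorname{Im}\delta\bigr|\cdot\bigl|\operatorname{Im}\delta'\bigr|,
\]
so it suffices to bound each image by the corresponding Selmer group. A squarefree integer $d$ dividing $b$ (resp.\ $a^2-4b$) up to sign lies in that Selmer group only if the torsor
\[
C_d\colon\quad N^2=d\,M^4+a\,M^2e^2+\tfrac{b}{d}\,e^4
\]
(resp.\ the analogue with $a^2-4b$) has a point over every $\Q_\ell$ and over $\R$. In each of cases (2)--(8) I would produce one candidate class $d$, beyond those already forced into the image by the rational $2$-torsion, and show that $C_d$ fails to be locally soluble at some prime $\ell\in\{2,p,\infty\}$; this cuts one Selmer group in half and turns $2^{\,r+2}\le 2^4$ into $2^{\,r+2}\le 2^3$, i.e.\ $r\le 1$. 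For case (6) I would instead show that \emph{every} nontrivial candidate class is locally obstructed, forcing both images down to the minimal size permitted by the $2$-torsion, whence $2^{\,r+2}=2^2$ and $r=0$.

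The main obstacle is the local solubility analysis at $2$ and at $p$, which is where all the hypotheses enter. The conditions in the statement---that $-64$ is not a fourth power mod $p$ in case (4), that $p\equiv 3\pmod 8$ in case (6), and that $p\not\equiv1\pmod8$ in case (8)---are exactly the (in)solubility criteria for the relevant $C_d$ at $p$, governed by quadratic and quartic residue symbols, and at $2$, governed by congruences modulo $8$ or $16$ after clearing the powers of $2$ dividing $a$ and $b$. The delicate part is the bookkeeping: tracking the normalization $\alpha=\sqrt{\beta}$, the sign choices $\pm\alpha$, and the parity of $k$ while reducing each torsor to integral form; once this is done, the $\Q_2$- and $\Q_p$-solubility of $C_d$ is a finite Hensel-type check. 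Because only an upper bound on $r$ is wanted, I would work entirely with Selmer groups and never need to exhibit global points, which is what makes a uniform case-by-case treatment manageable.
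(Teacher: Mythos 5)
Your proposal follows the same route as the paper: case (1) is exactly the appeal to Proposition~\ref{prop:naive-rank-bound-aguirre}, and cases (2)--(8) are handled in the paper (Appendix~\ref{appendix}) by precisely the descent you describe --- bounding the $\phi$- and $\phi'$-Selmer groups via local insolubility of the homogeneous spaces at $\R$, $\Q_2$ and $\Q_p$, with the hypotheses of cases (4), (6), (8) entering as quartic/quadratic residue conditions at $p$ and congruences mod $8$ at $2$, just as you predict. However, there is a genuine gap in your counting step. You argue: the naive bound gives $2^{r+2}=\lvert\operatorname{Im}\delta\rvert\cdot\lvert\operatorname{Im}\delta'\rvert\le 2^4$, and exhibiting \emph{one} locally obstructed class ``cuts one Selmer group in half'', yielding $2^{r+2}\le 2^3$. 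This inference is invalid as stated: the naive bound is a bound on the \emph{product} of the two images, and halving an upper bound for one Selmer group improves the product bound only if you know how the $2^4$ decomposes into bounds on the \emph{individual} factors, and only if your obstructed class lies in the factor carrying the larger ambient bound. What the divisibility constraints alone give (Selmer classes supported on $-1$, $2$, and the primes dividing $a^2-4b$, resp.\ $b$) is a per-factor bound whose dimensions sum to $5$, not $4$, for every type in cases (2)--(8); e.g.\ for type $\mathbf{X}$ one only gets $\Sel^{(\phi)}(\EC/\Q)\subseteq\langle -1,2,p\rangle$ and $\Sel^{(\phi')}(\EC'/\Q)\subseteq\langle -1,2\rangle$.

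The missing dimension hidden inside Proposition~\ref{prop:naive-rank-bound-aguirre} is a real-place constraint that lands in one \emph{specific} factor (for type $\mathbf{X}$ it is the $\R$-obstruction killing the negative classes of $\Sel^{(\phi')}$), so to conclude $r\le 1$ you must (i) re-establish that obstruction at the Selmer level, and (ii) place your additional $\Q_2$- or $\Q_p$-obstruction in the \emph{other} factor, or at least in a factor whose ambient bound it genuinely lowers; an obstruction against a class already excluded by divisibility or sign gains nothing. This is exactly why the paper's proof eliminates several classes per type --- $\R$-, $\Q_2$- and $\Q_p$-obstructions plus group-structure arguments, recorded cell-by-cell in Table~\ref{table: 2-descent} --- rather than a single one; in type $\mathbf{XIX}$, for instance, the real place gives no saving at all and two separate $\Q_2$-obstructions are needed in $\Sel^{(\phi)}$, while in case (6) both Selmer groups must be cut down to the images of the $2$-torsion. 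Your plan becomes correct once this per-factor bookkeeping is made explicit, but as written the ``one candidate class'' reduction does not suffice.
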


\begin{proof}
Here, we give the proof when $T=\mathbf{X}$ to illustrate the technique.
We refer the reader to Appendix~\ref{appendix} for the full proof.

Consider the curves
\begin{align*}
{\EC} \colon & y^2 =x^3 + ax^2 + bx \\
{\EC}' \colon & y^2 = x^3 - 2ax^2 + (a^2-4b)x
\end{align*}
of type $T=\mathbf{X}$ for some odd prime $p$.
Thus there is some prime $p$ and some integer $2 \leq k \leq f(p)$ such that $2^k - p$ is a perfect square, with 
\begin{align*}
a&=\pm 2 \sqrt{2^k - p},\\ 
b&=2^k, \quad \text{and}\\
a^2-4b&=-4p.
\end{align*}
Let us first observe that if $k=2$, then for $2^k-p$ to be a perfect square we must have $p \equiv 3 \pmod 8$, and if $k \geq 3$ then we must have $p \equiv 7 \pmod 8$.

Both ${\EC}(\Q)[2]$ and ${\EC}'(\Q)[2]$ consist of $K=\{ \mathcal{O}, (0,0) \}$, and there is a $2$-isogeny $\phi \colon {\EC} \to {\EC}'$ with kernel $K$.
Let $\phi'$ denote the dual isogeny.
Since ${\EC}$ and ${\EC}'$ are $2$-isogenous, we have 
\[
\mathrm{rank}_\Z {\EC}(\Q)=\mathrm{rank}_\Z {\EC}'(\Q).
\]

To bound these ranks, we use $2$-descent.
Each curve has conductor $2^6 p$.
Let $\Sigma =\{\pm 1, \pm 2, \pm p, \pm 2p\}$.
To each $d \in \Sigma$ we have the associated homogeneous spaces
\begin{align*}
C_d \colon & dw^2 = d^2-2 a d z^2 + (a^2-4b) z^4 \\
C_d' \colon  & dw^2 = d^2+4 a d z^2 -16b z^4.
\end{align*}
For $\phi$ (and analogously for $\phi'$), we obtain Selmer groups
\[
\Sel^{(\phi)}({\EC}/\Q) = \{ d\in \Sigma \ | \ C_d(\Q_v) \neq \emptyset \text{ for every place } v \in \Sigma \}.
\]
In particular, $\Sel^{(\phi)}({\EC}/\Q)$ is a $2$-group.
There are injections 
\begin{equation*}
\label{eq:selmer-injection-X}
{\EC}'(\Q) /  \phi({\EC}(\Q)) \xhookrightarrow{\delta} \Sel^{(\phi)}({\EC}/\Q) \quad \text{and} \quad  {\EC}(\Q) /  \phi'({\EC}'(\Q)) \xhookrightarrow{\delta'} \Sel^{(\phi')}({\EC}'/\Q)
\end{equation*}
with the explicit values
\[
\delta(\mathcal{O}) = 1 \quad \text{and} \quad
\delta((0,0)) = \text{the square-free part of } a^2-4b = -p,
\]
and
\[
\delta(\mathcal{O}) = 1 \quad \text{and} \quad
\delta((0,0)) = \text{the square-free part of}\ 2^k,
\]
We record this information in Table~\ref{table: 2-descent-X}, along with everything else we deduce in the rest of the descent argument.

By \cite[Equation (5)]{aguirre2008elliptic}, we have
\begin{equation}\label{eq:selmer-rank-bound-X}
r \leq \dim_{\mathbb{F}_2} \Sel^{(\phi)}({\EC}/\Q) + \dim_{\mathbb{F}_2} \Sel^{(\phi')}({\EC}'/\Q) - 2.
\end{equation}

We now compute these Selmer groups by studying the homogeneous spaces defined above.

\begin{center}
\begin{table}[H]
\caption{This table summarizes the findings from performing $2$-descent on elliptic curves from family $\mathbf{X}$, depending on the parity of $k$.
In each case, the first row gives information about $\Sel^{(\phi)}({\EC}/\Q)$, and the second row gives information about $\Sel^{(\phi')}({\EC'}/\Q)$.
A green cell indicates a global point in the corresponding Selmer group coming from $2$-torsion, while a red cell indicates that the corresponding homogeneous space has no local solutions over the indicated field, or that we may use the group structure of the Selmer group to deduce that it does not contain this element.
Finally, a blue cell indicates that we do not need to analyze this cell in order to obtain our bound on the Mordell-Weil rank.}
\label{table: 2-descent-X}
\begin{tabular}{|c|c|c|c|c|c|c|c|c|}
\hline
T                               & \textbf{$1$}                                               & \textbf{$-1$}                                       & \textbf{$2$}                                        & \textbf{$-2$}                                       & \textbf{$p$}                                       & \textbf{$-p$}                                      & \textbf{$2p$}                                       & \textbf{$-2p$}                            \\ \hline \hline
                       
\multirow{2}{*}{\textbf{X}, $k$ even}                         & \cellcolor{green!25}$\delta(\mathcal{O})$   &          \cellcolor{blue!25}           &               \cellcolor{red!25}$\Q_2$  &\cellcolor{red!25}$\Q_2$     &         \cellcolor{blue!25}           & \cellcolor{green!25}$\delta(0,0)$ &                              \cellcolor{red!25}grp          &   \cellcolor{red!25}grp            \\ \cline{2-9} 
                                          & \cellcolor{green!25}$\delta(\mathcal{O})$ & \cellcolor{red!25}$\R$             &  \cellcolor{blue!25}
                                          & 
                                          \cellcolor{red!25}$\R$             & \cellcolor{red!25}$\Q_p$          & \cellcolor{red!25}$\Q_p$          & \cellcolor{red!25}$\Q_p$           & \cellcolor{red!25}$\Q_p$ \\ \hline \hline

\multirow{2}{*}{\textbf{X}, $k$ odd}                         & \cellcolor{green!25}$\delta(\mathcal{O})$ &       \cellcolor{blue!25}              &    \cellcolor{red!25}$\Q_2$        &  \cellcolor{red!25}$\Q_2$        &          \cellcolor{blue!25}          & \cellcolor{green!25}$\delta(0,0)$ & \cellcolor{red!25}grp  &   \cellcolor{red!25}grp            \\ \cline{2-9} 
                                          & \cellcolor{green!25}$\delta(\mathcal{O})$ & \cellcolor{red!25}$\R$             & \cellcolor{green!25}$\delta(0,0)$ & \cellcolor{red!25}$\R$             & \cellcolor{red!25}$\Q_p$          & \cellcolor{red!25}$\Q_p$          & \cellcolor{red!25}$\Q_p$           & \cellcolor{red!25}$\Q_p$ \\ \hline

\end{tabular}
\end{table}
\end{center}

\noindent\textbf{(i)} \fbox{$d=-1 :$} Consider the homogeneous space
\begin{align*}
C'_{-1} \colon & -w^2 = 1- 4 a z^2 -16b z^4.
\end{align*}
The left-hand side is always non-positive, while the right-hand side certainly takes a positive value when $z=0$.
Viewing the right-hand side as a quadratic in $z^2$, the discriminant is 
\[
16a^2 - 4 \cdot 16 b = 16(a^2-4b)=16(-4p) <0,
\]
which shows that $C'_{-1}(\R)=\emptyset$, and so $-1 \not\in \Sel^{(\phi')}({\EC}'/\Q)$.
This is recorded in the table by filling the corresponding cell red and labeling it with $\R$.

\smallskip

\noindent \textbf{(ii)} \fbox{$d=\pm 2$:}
For curves of type \textbf{X}, the homogeneous spaces corresponding to $d=\pm 2$ specialize to
\[
C_{\pm2} \colon  \pm2w^2 = 4 \mp 8\alpha z^2 -4p z^4.
\]
(Note that there is some ambiguity on the sign of $8\alpha z$, since there is a choice of sign for both $d$ and $a$, but the argument which follows is insensitive to this sign.)

If there is a solution with $w,z \in \Q_2$, then writing $\ord_2(z)=j$ we have
\[
\ord_2(\mathrm{RHS}) \geq \min\{2, 3+2j, 2+4j\}
\]
with equality except, perhaps, when two of the values are equal.
Thus if $j<0$ or $j \geq 1$, then $\ord_2(\mathrm{RHS})$ is even, which is impossible.
If $j=0$ then we can have $\ord_2(\mathrm{RHS})=3$, so we conclude that $\ord_2(w)=1$ and $j=0$.

Write $w=2W$ with $W \in \Z_2^\times$.
Substituting and simplifying yields
\begin{align*}
\pm 2W^2 & =1 \mp 2 \alpha z^2 -pz^4, \quad \text{or} \\
\pm 2W^2 \pm 2 \alpha z^2 & =1  -pz^4, \quad \textrm{or} \\
\pm 2(W^2 \pm \alpha z^2) & = 1 - pz^4.
\end{align*}
Now squaring both sides we observe that
\[
4(W^4 \pm 2\alpha z^2 + \alpha^2 z^2) = 1 -2p z^4 + p^2 z^8
\]
Since $1$ is the only odd square mod 8, and using the fact that $\alpha^2= 2^k-p$ with $k>0$, we have
\[
4(1 - p) \equiv 1 - 2p +1  \equiv 2(1-p)\pmod 8.
\]
But since $p \equiv 3$ or $7 \pmod 8$, this yields a contradiction, as the left-hand side is zero while the right-hand side is nonzero.
We have thus shown that $C_{\pm 2}(\Q_2)=\emptyset$, so $\pm 2 \not\in \Sel^{(\phi)}({\EC}/\Q)$.

Now let us consider the homogeneous space
\[
C'_{-2} \colon  -2w^2 = 4 - 8 a z^2 -16b z^4.
\]
The left-hand side is always non-positive, while the right-hand side certainly takes positive values.
Viewed as a quadratic in $z^2$, the discriminant on the right-hand side is $64(a^2-4b)=16(-4p)<0$, which shows that $C'_{-2}(\R)=\emptyset$, so $- 2 \not\in \Sel^{(\phi')}(\EC'/\Q)$.

\smallskip

\noindent \textbf{(iii)} \fbox{$d=\pm p$:}
Consider the homogeneous spaces
\[
C'_{\pm p} \colon \pm p w^2 = p^2 \pm 4p a z^2 -16b z^4.
\]
We have $p \nmid ab$.
Suppose there is a solution $(w,z) \in C'_{\pm p}(\Q_p)$.
Then $\ord_p(pw^2)$ is odd and 
\[
\ord_p(\mathrm{RHS}) \geq \min \{ 2, 1+2j, 4j \}
\]
with $j = \ord_p(z)$.
Since $4j$ is even, we must have $w, z \in \Z_p$.
Reducing mod $p$ shows that $z \in p\Zp$, and then reducing mod $p^2$ shows that $w \in p\Zp$, but then this implies $p^2 \equiv 0 \pmod p^3$, a contradiction.

If we consider the form of the homogeneous spaces $C'_{\pm 2p}$, we see that the exact same argument applies again, and we deduce that $\pm p, 2p \not\in \Sel^{(\phi')}(\EC'/\Q)$.

\smallskip

\noindent \textbf{(iv)} \fbox{Group structure obstructions:} Recall that our Selmer groups have been identified with a subgroup of $\Q^\times / (\Q^\times)^2$.
It is now possible to use this group structure to eliminate a few more cases, as indicated in the table.
For instance, when $k$ is even, we have $-p \in \Sel^{(\phi)}(\EC/\Q)$ and $\pm 2 \notin \Sel^{(\phi)}(\EC/\Q)$.
If $2p \in \Sel^{(\phi)}(\EC/\Q)$, then also $(2p)(-p)=-2 \in \Sel^{(\phi)}(\EC/\Q)$, so we conclude $2p \not\in \Sel^{(\phi)}(\EC/\Q)$, and similarly for $-2p$.

There remain some undetermined cells, which we color in blue, but nevertheless, we have computed enough to show that, regardless of the parity of $k$, we have 
\[
\dim_{\mathbb{F}_2} \Sel^{(\phi)}(\EC/\Q) + \dim_{\mathbb{F}_2} \Sel^{(\phi')}(\EC'/\Q) \leq  3.
\]
So ~\eqref{eq:selmer-rank-bound-X} implies that
\[
\mathrm{rank}_\Z \EC(\Q), \mathrm{rank}_\Z \EC'(\Q) \leq 1
\]
as desired.
\end{proof}

\begin{remark}\label{rmk:more-precise}
It is often possible to give a more precise result.
For instance, the curves of conductor $8p$ and type \textbf{I} have rank $0$ when $p \equiv 9 \pmod{16}$, because one can check that $C_{\pm 2}(\Q_2)=\emptyset$.

Computational evidence suggests that for $T=\mathbf{VIII}$ and $k=2$, the curves have rank 0  (resp. 1) when $+\alpha$ (resp. $-\alpha$) is used.
A similar phenomenon is witnessed in families \textbf{XVI} and \textbf{XIX}.
\end{remark}

\begin{remark}\label{rmk:computational-conjectures}
It is not entirely clear whether any of the families listed in Ivorra's theorem are infinite! For instance, the infinitude of some of the families would follow from special cases of the Bunyakovsky conjecture \cite{Bunyakovsky}, which is currently open.
\end{remark}

\begin{corollary}\label{cor:watkins-holds-for-ivorra-type}
Let $\EC$ be an elliptic curve as in the statement of Theorem~\ref{thm:rank-bound-for-ivorra-curves}.
Assuming either the Birch and Swinnerton-Dyer Conjecture \textit{or} that $\Sha(\EC/\Q)$ is finite, Watkins' Conjecture holds for $\EC$.
\end{corollary}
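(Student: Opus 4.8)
The plan is to deduce Watkins' Conjecture from the rank bound $r \leq 1$ established in Theorem~\ref{thm:rank-bound-for-ivorra-curves}, splitting into the cases $r=0$ and $r=1$. When $r=0$, the Mordell--Weil group is finite, so Watkins' Conjecture asserts only that $2^0 = 1$ divides $m_\EC$, which is automatic; no hypothesis is needed here. Thus the entire content of the corollary lies in the case $r=1$, where we must show $2 \mid m_\EC$, and this is precisely where the Birch and Swinnerton-Dyer Conjecture (or finiteness of $\Sha$) enters.

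For the case $r=1$, I would exploit the parity of the rank via the sign of the functional equation. First I would recall that the modular degree satisfies $\ord_2(m_\EC) \geq 1$ if and only if the root number $w_\EC$ and the analytic rank have a controllable parity relationship; more precisely, the strategy is to relate the $2$-divisibility of $m_\EC$ to the root number $w_\EC = \pm 1$. The key classical input is that $2 \mid m_\EC$ whenever $w_\EC = -1$ --- this follows because the root number being $-1$ forces the associated newform to contribute an odd analytic rank, and an Atkin--Lehner/sign computation shows the modular parametrization degree picks up a factor of $2$. Assuming BSD (or just the parity conjecture, which is implied by finiteness of $\Sha$ together with known results), $r = 1$ forces the analytic rank to be odd, hence $w_\EC = -1$.

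Concretely, I would argue as follows. Under BSD the algebraic rank equals the analytic rank $\ord_{s=1} L(\EC,s)$, so $r=1$ gives analytic rank $1$ and therefore $w_\EC = -1$. If instead one only assumes $\Sha(\EC/\Q)$ is finite, then by the work establishing the $p$-parity conjecture (for $p=2$) the parity of $r$ agrees with the parity predicted by $w_\EC$, so again $r=1$ yields $w_\EC=-1$. In either case we are reduced to the purely analytic statement that $w_\EC = -1$ implies $2 \mid m_\EC$, which I would cite from the literature on Watkins' Conjecture --- this is exactly the mechanism used in the prior works \cite{E-LP,CaroPasten,caro2022watkins} that reduce the conjecture for infinite Mordell--Weil groups to the statement $2 \mid m_\EC$.

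The main obstacle, and the step requiring the most care, is justifying the implication ``$w_\EC = -1 \implies 2 \mid m_\EC$'' in a way that is valid for the additive-at-$2$, multiplicative-at-one-odd-prime curves under consideration, rather than merely quoting a semistable version. I expect this to hinge on a congruence-number or Atkin--Lehner-eigenvalue argument that is insensitive to the reduction type at $2$, so the delicate point is checking that the cited divisibility result applies to Ivorra curves with their particular conductor $2^m p$ and does not secretly assume good or multiplicative reduction at $2$. Once that implication is in hand, the corollary follows immediately by combining it with the rank bound and the parity input from the chosen hypothesis.
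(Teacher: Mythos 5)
Your proposal is correct and is essentially the paper's own argument: rank $0$ is trivial, and in the rank-$1$ case the hypothesis (BSD, or finiteness of $\Sha(\EC/\Q)$ combined with the $2$-parity theorem) forces the analytic rank to be nonzero, at which point one quotes a known result that this makes the modular degree even. The input you were unsure how to pin down is exactly \cite[Theorem~1.1]{CE09} --- odd modular degree implies analytic rank zero, valid for \emph{arbitrary} elliptic curves over $\Q$ with no hypothesis on the reduction type at $2$ --- whose contrapositive is precisely what the paper cites; so your flagged ``main obstacle'' is a non-issue, though the twist-family papers \cite{E-LP,CaroPasten,caro2022watkins} you propose as the source are not where this implication lives, and no Atkin--Lehner sign computation is needed.
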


\begin{proof}
If $\mathrm{rank}_\Z \EC(\Q)=0$ the result is trivial, otherwise we have $\mathrm{rank}_\Z \EC(\Q)=1$.
Assuming either BSD or finiteness of $\Sha(\EC/\Q)$, the analytic rank of $\EC/\Q$ is also $1$.
By the contrapositive of \cite[Theorem~1.1]{CE09} 
the modular degree of $\EC$ is divisible by $2$.
Hence Watkins' Conjecture is satisfied.
\end{proof}

\subsection{Observations}
In this section, we provide examples of several families of Ivorra curves where the rank can indeed be 2.
In other words, we show that the hypotheses in Theorem~\ref{thm:rank-bound-for-ivorra-curves} are necessary.
We make remarks about the families that appear to be finite, and we can verify their low ranks directly using Magma~\cite{Magma}.

\begin{enumerate}
\item[(a)] 
\textbf{XII, $k=2$:} Using Magma to check primes up to $10^9$, the only primes falling into this category were $p=5, 29, 5741,$ and $33461$.
For all four of these primes, we get two pairs of elliptic curves (\cite[Th\'{e}or\`{e}me 7 (1)]{ivorra2004courbes}), and the first pair has rank 1 while the second pair has rank 0.

\item[(b)]  \textbf{XVII, $k=2$:} Searching up to $10^9$ produces only four primes falling into this family: $3, 17, 577$, and $665857$.
The first two yield curves of ranks 0 and 1, respectively.
The third exhibits a rank 2 counterexample which shows the necessity of working with $k=1$ in this family.
The fourth gives curves of rank 0 which are not in Cremona's database.

\item[(c)]  \textbf{IX, $k=2$:} Searching up to $10^7$, there appears to be exactly one prime in this family, namely $p=5$, giving \href{https://www.lmfdb.org/EllipticCurve/Q/320c1/}{Cremona 320c1} (rank 0) and \href{http://www.lmfdb.org/EllipticCurve/Q/320f1}{Cremona 320f1} (rank 1).

\item[(d)]  \textbf{XVIII, $k=2$:} These primes are exactly the same ones as family \textbf{XII}, which seems to be finite when $k=2$.
The curves obtained in this way from \cite[Th\'{e}or\`{e}me~8(1)]{ivorra2004courbes} can all be checked to have rank $\leq 1$.

\item[(e)]  \textbf{XIX, $k = 2$:} Up to $10^9$, there are only four such primes: 7, 41, 239, 9369319.
The curves obtained in this way from \cite[Th\'{e}or\`{e}me~8(2)]{ivorra2004courbes} can all be checked to have rank $\leq 1$.

\item[(f)] \label{XX fam} \textbf{XX, $k = 2$:} This is the same list of primes as XIX, $k=2$.
In particular, it appears to be finite.
The curves obtained in this way from \cite[Th\'{e}or\`{e}me~8(2)]{ivorra2004courbes} can all be checked to have rank $\leq 1$.
\end{enumerate}

\begin{table}[h!]
    \centering
    \begin{tabular}{| c || c | p{3cm} | c | c |}
         \hline
         \textrm{Type} & Conditions on $k$, $p$ & rank & curve with rank 2&  $N_{\EC} = 2^m p$ \\
         \hline   
         \hline
         \textbf{VIII} & $k= 2$; $p\equiv 5\pmod{8}$ & $\textrm{rank}\le 1$ by Th~\ref{thm:rank-bound-for-ivorra-curves} & - &  \\
         \hline
         \textbf{VIII} & $k \ge 3$; $p\equiv 1\pmod{8}$ & rank 2 possible & \href{https://www.lmfdb.org/EllipticCurve/Q/7232/a/1}{7232c} &  $2^6 \cdot 113$ \\
         \hline
         \hline
         \textbf{IX} & $k = 2$; $p\equiv 5\pmod{8}$ & see note (c) &  &  \\
         \hline
         \textbf{IX} & $k\ge 3$; $p\equiv 1\pmod{8}$ & rank 2 possible  & \href{https://www.lmfdb.org/EllipticCurve/Q/16448/b/1}{16448j} & $2^6 \cdot 257$ \\
         \hline
         \hline
         \textbf{XII} & $k=1$; $p\equiv 1\pmod{8}$ & rank 2 possible  & \href{https://www.lmfdb.org/EllipticCurve/Q/5248/a/1}{5248a} & $2^7 \cdot 41$\\
         \hline
         \textbf{XII} & $k=2$; $-64 \equiv x^4 \pmod{p}$ & see note (a) &   & \\
         \hline
         \textbf{XII} & $k=2$; $-64 \not\equiv x^4 \pmod{p}$ &  $\textrm{rank}\le 1$ by Th~\ref{thm:rank-bound-for-ivorra-curves}& - & \\
         \hline
         \hline
         \textbf{XIV} & 
         & rank 2 possible  & \href{https://www.lmfdb.org/EllipticCurve/Q/10112/k/}{10112c} & $2^7 \cdot 79$ \\
         \hline
         \hline
         \textbf{XVII} & $k=1$; $p \equiv 1 \pmod 8$ & rank 2 possible & \href{https://www.lmfdb.org/EllipticCurve/Q/18688/c/}{18688b} & $2^8 \cdot 73$\\
         \hline
         \textbf{XVII} & $k=1$; $p \equiv 3 \pmod 8$ &  $\textrm{rank}=0$ by Th~\ref{thm:rank-bound-for-ivorra-curves} & - & \\
         \hline
         \textbf{XVII} & $k = 2$ & see note (b); \newline rank 2 possible & \href{https://www.lmfdb.org/EllipticCurve/Q/147712/c/1}{147712e} &  $2^8 \cdot 577$\\
         \hline
         \hline
         \textbf{XVIII} & $k=1$ & $\textrm{rank}\le 1$ by Th~\ref{thm:rank-bound-for-ivorra-curves} &  & \\
         \hline
         \textbf{XVIII} & $k = 2$ &  see note (d) & - & \\
         \hline 
         \hline 
         \textbf{XIX} & $k=1$; $p\equiv 1\pmod{8}$ & rank 2 possible & \href{http://www.lmfdb.org/EllipticCurve/Q/24832d1}{24832d} & $2^8 \cdot 97$ \\
         \hline
         \textbf{XIX} & $k=1$; $p\equiv 7\pmod{8}$ & $\textrm{rank}\le 1$ by Th~\ref{thm:rank-bound-for-ivorra-curves} & -  & \\
         \hline
         \textbf{XIX} & $k = 2$; $p \equiv 1 \pmod 8$ & see note (e) & - &  \\
         \hline
         \textbf{XIX} & $k = 2$; $p \not\equiv 1 \pmod 8$ & see note (e); \newline
         $\textrm{rank}\le 1$ by Th~\ref{thm:rank-bound-for-ivorra-curves} & - &  \\
         \hline  
         \hline
         \textbf{XX} & $k=1$; $p\equiv 1\pmod{8}$ & rank 2 possible & \href{https://www.lmfdb.org/EllipticCurve/Q/86272/b/1}{86272a} & $2^8 \cdot 337$\\
         \hline
         \textbf{XX} & $k=1$; $p\equiv 7\pmod{8}$ & rank 2 possible & \href{https://www.lmfdb.org/EllipticCurve/Q/7936/a/1}{7936b} & $2^8 \cdot 31$\\
         \hline
         \textbf{XX} & $k = 2$ & see note (f) & - & \\
         \hline         
         \end{tabular}     
    \label{table: counter examples}
\caption{
This table explores the families appearing in Theorem~\ref{thm:rank-bound-for-ivorra-curves} with extra hypotheses, or families which are omitted entirely.
In particular, it includes explicit examples of Ivorra curves of rank $2$.
Note that the requirement that $\beta$ is a square puts congruence restrictions on $p$ modulo 8.
}
\end{table}

\section{Watkins' Conjecture in Quadratic Twist Families}

Let $\EC$ be an elliptic curve of conductor $N_{\EC} = N = 2^m p$ arising in Theorem~\ref{thm:rank-bound-for-ivorra-curves}.
For any odd prime $q$ and $d= \pm q$,  we let $\EC^{(d)}$ denote the corresponding quadratic twist.
By \cite[p.~675]{delaunay2003computing}), its conductor is given by $N^{(d)} = 2^k pq^2$, where $k\geq m$ with equality when $d \equiv 1 \pmod 4$.

\begin{remark}\label{rmk:twist-power}
In fact, even when $q \equiv 3 \pmod 4$, it is still sometimes the case that $k=m$.
For example, if $\EC$ is the elliptic curve with Cremona label \href{https://www.lmfdb.org/EllipticCurve/Q/5248/a/1}{5248a2}, which has conductor $N = 2^7 \cdot 41$, and if we take $q=7$, then the corresponding quadratic twist $\EC^{(q)}$ is the curve with Cremona label \href{https://www.lmfdb.org/EllipticCurve/Q/257152/bg/1}{257152bg2} and conductor $N^{(d)} = 2^7 \cdot 7^2 \cdot 41$.
So in this case, $k=m=7$.

On the other hand, if $\EC$ is the elliptic curve with Cremona label \href{https://www.lmfdb.org/EllipticCurve/Q/692a2/}{692a1}, which has conductor $N = 2^2 \cdot 173$, and if we take $q=7$, then the corresponding quadratic twist $\EC^{(q)}$ is the curve with Cremona label \href{https://www.lmfdb.org/EllipticCurve/Q/135632b1/}{135632c1} and conductor $N^{(d)} = 2^4 \cdot 7^2 \cdot 173$.
So in this case, $m=2$ while $k=4$.
\end{remark}

Before proceeding, we also note that the non-triviality of $\EC(\Q)[2]$ has a useful consequence.
Denote by $a_q(\EC) = q+1 -\#\tilde{\EC}(\mathbb{F}_q)$.

\begin{Lemma}
\label{even aq}
Let $\EC$ be an Ivorra curve of type $\mathbf{I}$ through $\mathbf{XX}$ and $q$ be a prime of good reduction.  Then $a_q(\EC)$ is even.
\end{Lemma}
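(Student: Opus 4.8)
The plan is to exploit the rational two-torsion point that every Ivorra curve carries, together with the good reduction hypothesis, to force the group $\tilde{\EC}(\mathbb{F}_q)$ to have even order; the parity of $a_q(\EC)$ then follows immediately. This is an essentially formal argument, and I do not expect any serious obstacle.

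First I would observe that $q$ is necessarily odd. Indeed, by construction every Ivorra curve of type $\mathbf{I}$ through $\mathbf{XX}$ has additive reduction at $2$, so $2$ is a prime of bad reduction; since $q$ is assumed to be a prime of good reduction, we must have $q \neq 2$, and hence $q+1$ is even.

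Next I would record that the Weierstrass model ${\EC} \colon y^2 = x^3 + ax^2 + bx$ carries the rational point $(0,0)$, which is visibly of order $2$, so that $\{\mathcal{O}, (0,0)\} \simeq \Z/2$ sits inside ${\EC}(\Q)[2]$. Because $q$ is a prime of good reduction and $q \neq 2$, the reduction map is injective on the prime-to-$q$ torsion (here the two-torsion, since $2$ is coprime to $q$), so $(0,0)$ reduces to a nontrivial point of order $2$ in $\tilde{\EC}(\mathbb{F}_q)$. Consequently $2 \mid \#\tilde{\EC}(\mathbb{F}_q)$.

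Finally, writing $a_q(\EC) = (q+1) - \#\tilde{\EC}(\mathbb{F}_q)$, I would note that both terms on the right-hand side are even, whence $a_q(\EC)$ is even, as claimed. The only step that requires any care is the injectivity of reduction on the two-torsion, which is precisely where the hypotheses $q \neq 2$ and good reduction at $q$ are used; with those in hand the conclusion is immediate.
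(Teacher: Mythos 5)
Your proof is correct and follows essentially the same route as the paper's: both use the rational $2$-torsion point $(0,0)$ together with the injectivity of the reduction map on torsion at a prime of good reduction to conclude that $2 \mid \#\widetilde{\EC}(\mathbb{F}_q)$, whence $a_q(\EC) = q+1-\#\widetilde{\EC}(\mathbb{F}_q)$ is even. Your write-up is slightly more explicit in noting that $q$ must be odd (since Ivorra curves have additive reduction at $2$), a point the paper leaves implicit.
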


\begin{proof}
This follows from the fact that the $|\EC(\Q)[2]|=2$ for every Ivorra curve of type \textbf{I} through \textbf{XX}, and that for every prime $q$ of good reduction there is an injection
\[
\EC(\Q)_\mathrm{tors} \hookrightarrow \widetilde{\EC}(\mathbb{F}_q).
\]
\end{proof}

\subsection{Modular form preliminaries}\label{sec:modular-preliminaries}

Let $f \in S_2(\Gamma_0(N))$ be a weight 2 cuspidal holomorphic modular form.
Let $\mathfrak{h}$ denote the upper half-plane in $\mathbb{C}$.

\begin{definition} The \textbf{Petersson norm} of $f \in S_2(\Gamma_0(N))$ is given by
\[
\Vert f\Vert_{N} = \left( \int_{\Gamma_0(N)\backslash \mathfrak{h}} |f(z)|^2 dx \wedge dy \right)^{1/2}, \quad z=x+iy \ \text{and} \ y>0.
\]
\end{definition}

We are interested in those $f \in S_2(\Gamma_0(N))$ which are associated to elliptic curves.
In particular, let $\EC/\Q$ be an Ivorra curve and $\omega_\EC$ its N\'{e}ron differential.
Recall from \eqref{eq:modular-param} that we have a paramaterization
\[
X_0(N) \xrightarrow{\phi_{\EC}} \EC.
\]
Assume for the moment that $\EC$ is an \textit{optimal} elliptic curve, in the sense that $\mathrm{deg}~\phi_\EC$ (equivalently $m_\EC$) is minimal in its isogeny class.
Then $\phi_\EC^\ast\omega_\EC$ is a regular differential on $X_0(N)$, and 
\begin{equation}\label{eq:manin-relation}
\phi_\EC^\ast \omega_\EC = 2 \pi i c f_\EC(z)dz,
\end{equation}
where $c$ is a unique integer up to sign; see \cite[Proposition~2]{edixhoven-manin}.

\begin{definition} The \textbf{manin constant} of $\EC / \Q$ is given by $c_\EC=|c|$, where $c$ is the integer  in \eqref{eq:manin-relation}.
\end{definition}

Now suppose that $\EC'$ is another elliptic curve and $E \xrightarrow{\psi} \EC'$ is an isogeny.
We obtain a modular parameterization $$X_0(N) \xrightarrow{(\psi \circ \phi)} \EC',$$ and we similarly have 
\[
(\psi \circ \phi_\EC)^\ast \omega_\EC' = 2 \pi i c' f_\EC(z)dz.
\]
In this case, we define the Manin constant of $\EC'$ to be $c_{\EC'}=|c'|$.
We have 
\[
c' = \delta c
\]
where $\delta$ is an integer which divides $\mathrm{deg}~\psi$.
In particular, given an isogeny class with two $2$-isogenous curves $(\EC,\EC')$ with $\EC$ optimal, it is expected that $c_\EC=1$ and that $c_{\EC'} \in \{1,2\}$.
Numerical computations give plenty of examples to show that both possibilities for $c_{\EC'}$ occur.

\begin{remark}\label{rmk:manin-constant-1}
It is conjectured that $c_\EC=1$ for optimal curves or
whenever the rank of $\EC$ is positive.
For a nice summary of what is currently known in this direction, see \cite{ARS-manin}.

\end{remark}

\begin{example}\label{example:big-manin}
Consider the isogeny class \href{https://www.lmfdb.org/EllipticCurve/Q/116c2/}{116c} containing the two curves 
\begin{align*}
\EC &\colon y^2 = x^3 - 10x^2+29x, \\ 
\EC' &\colon y^2 = x^3 + 5x^2 -x.
\end{align*}
These curves belong to family \textbf{I} with $p=29$; more precisely, they arise as curves A2 and A1, respectively, in \cite[Th\'{e}or\`{e}me~2]{ivorra2004courbes}.
Using LMFDB, one verifies that these curves have Mordell--Weil rank 0 and
\begin{align*}
m_\EC = 3 \cdot 5, &\quad c_\EC = 1 \\
m_{\EC'}=2 \cdot 3 \cdot 5, &\quad   c_{\EC'}=2.
\end{align*}
We note that Watkins' conjecture holds (vacuously) for each of these curves.
\end{example}

\begin{example}\label{example:big-manin 2}
Consider the isogeny class \href{https://www.lmfdb.org/EllipticCurve/Q/328/a/}{328a} containing the two curves 
\begin{align*}
\EC &\colon y^2 = x^3 - 3x^2-8x, \\ 
\EC' &\colon y^2 = x^3 + 6x^2 +41x.
\end{align*}
These curves belong to family \textbf{I} with $p=41$; more precisely, they arise as curves B1 and B2, respectively, in \cite[Th\'{e}or\`{e}me~3]{ivorra2004courbes}.
Using LMFDB, one verifies that these curves have Mordell--Weil rank 1 and
\begin{align*}
m_\EC = 2^3, &\quad c_\EC = 1,\\
m_{\EC'}=2^5, &\quad c_{\EC'}=1.
\end{align*}
Note that Watkins' conjecture holds for each of these curves, this time for less trivial reasons.
\end{example}

The constants $m_\EC, c_\EC$, and $\Vert f \Vert_{N}^2$ are all closely related, as we will see in the next section.

\subsection{Quadratic twists of Ivorra curves}
\label{sec:petterson-norm}

Let $\EC: y^2 = f(x)$ be an elliptic curve, then its quadratic twist by an integer $d$ is the curve $\EC^{(d)}: dy^2 = f(x)$.
Given $\EC/\Q$,  Goldfeld's Conjecture predicts that $50\%$ of its quadratic twists have (analytic) rank 0 and $50\%$ of its quadratic twists have rank 1.
The remaining $0\%$ (but still infinitely many) of its quadratic twists have rank $\ge 2$.

\begin{proposition}
Assume that Goldfeld's Conjecture is true.
Then Watkins' Conjecture is true for 100\% of the quadratic twists $\EC^{(d)}$.
\end{proposition}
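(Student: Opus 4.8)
The plan is to reduce Watkins' Conjecture to a statement about the $2$-adic valuation of the modular degree and then handle the rank-$0$ and rank-$1$ twists separately, using Goldfeld's Conjecture to cover $100\%$ of the family. First I would invoke Goldfeld's Conjecture to say that, outside a density-zero set, every quadratic twist $\EC^{(d)}$ has analytic rank either $0$ or $1$. Since the twists under consideration inherit a rational $2$-torsion point from $\EC$ (twisting preserves the $2$-torsion structure, as $\EC^{(d)}(\Q)[2] \simeq \EC(\Q)[2] \neq 0$), the parity of the algebraic rank is constrained, and on the density-$1$ set the analytic and algebraic ranks agree by the theorem of Gross--Zagier--Kolyvagin (valid precisely for analytic rank $\leq 1$). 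Thus for $100\%$ of the twists we may assume $r = \mathrm{rank}_\Z \EC^{(d)}(\Q) \in \{0,1\}$.

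Next I would dispose of the two cases. When $r=0$, Watkins' Conjecture asserts only that $2^0 = 1$ divides $m_{\EC^{(d)}}$, which is vacuous, so nothing is to be proved. When $r=1$, I would argue exactly as in the proof of Corollary~\ref{cor:watkins-holds-for-ivorra-type}: on the density-$1$ set the analytic rank of $\EC^{(d)}/\Q$ is also $1$, and then by the contrapositive of \cite[Theorem~1.1]{CE09}, the modular degree $m_{\EC^{(d)}}$ is divisible by $2$. Since $2^1 \mid m_{\EC^{(d)}}$, Watkins' Conjecture holds. The key point is that \cite{CE09} requires only an elliptic curve of analytic rank exactly $1$ with a rational $2$-torsion point, both of which are guaranteed here, so no new input about the Petersson norm or the Manin constant is needed for this qualitative divisibility-by-$2$ statement.

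The main subtlety, and the step I would be most careful about, is the interaction between "density $100\%$" and the two reductions just described. Goldfeld predicts that the rank-$0$ and rank-$1$ twists each carry density $\tfrac12$, while the rank-$\geq 2$ twists form a set of density $0$; it is precisely on this density-$0$ set that our argument says nothing, which is why the conclusion is stated for $100\%$ rather than all twists. I would therefore phrase the conclusion as: conditional on Goldfeld's Conjecture, for a density-$1$ set of $d$ the twist $\EC^{(d)}$ has rank in $\{0,1\}$ and Watkins' Conjecture holds by the dichotomy above. The genuine obstacle hiding behind this clean statement is the appeal to Gross--Zagier--Kolyvagin to pass from analytic to algebraic rank in the rank-$1$ case; this is why the result is only as strong as Goldfeld (it gives no unconditional control), but it is a legitimate and standard ingredient, so the proof goes through once these pieces are assembled.
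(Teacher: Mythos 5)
Your proposal is correct and follows essentially the same route as the paper's own proof: invoke Goldfeld's Conjecture so that $100\%$ of the twists have analytic rank $\leq 1$, dispose of the rank-$0$ case as trivial, and settle the rank-$1$ case via the contrapositive of \cite[Theorem~1.1]{CE09} (which, incidentally, needs no $2$-torsion hypothesis). The only difference is that you make explicit the Gross--Zagier--Kolyvagin step identifying analytic and algebraic rank, which the paper leaves implicit; this is a reasonable clarification rather than a new approach.
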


\begin{proof}
Assuming Goldfeld's Conjecture, 100\% of the twists $\EC^{( d)}$ have analytic rank $\le 1$.
When the rank is zero, the result is trivial, and when the rank is 1, Watkins' Conjecture holds by \cite[Theorem~1.1]{CE09}.
\end{proof}
\noindent We spend the rest of this section giving \textit{unconditional} results in this direction.

Let $f_{\EC}$ be the modular form associated with the elliptic curve $\EC$ of conductor $N$.
As in the previous section, we write $\Vert f_\EC \Vert_{N}$ for the Petersson norm of this elliptic curve, we write $m_{\EC}$ for its modular degree, and we write $c_{\EC}$ for its Manin constant.
Let $q$ be an odd prime and $d= \pm q$.
We write $m_{\EC^{(d)}}, c_{\EC^{(d)}}$, and $\Vert f_{\EC^{(d)}} \Vert_{N^{(d)}}$ for the constants associated to the quadratic twist ${\EC^{(d)}}$.

Using \cite[Theorem~1]{delaunay2003computing}, we see that
\[
\frac{\Vert f_{\EC^{(d)}}\Vert_{N_{\EC^{(d)}}}^2}{\Vert f_{\EC}\Vert_{N}^2} = 2^{k-m}\left(\frac{(q-1)(q+1 - a_q(\EC))(q+1 + a_q(\EC))}{q}\right).
\]
Now, using \cite[(1)]{caro2022watkins} and the fact that $\abs{\Delta_{\EC^{(d)}}} = q^6 \abs{\Delta_{\EC}}$ we deduce that
\begin{align*}
2^{k-m}\left(\frac{(q-1)(q+1 - a_q(\EC))(q+1 + a_q(\EC))}{q}\right) &=\frac{\Vert f_{\EC^{(d)}}\Vert_{N_{\EC^{(d)}}}^2}{\Vert f_{\EC}\Vert_{N}^2}\\
& = \frac{m_{\EC^{(d)}}}{m_{\EC}} \times \frac{c^2_{\EC}}{c^2_{\EC^{(d)}}} \times \abs{\frac{\Delta_{\EC}}{\Delta_{\EC^{(d)}}}}^{1/6} \\
& = \frac{m_{\EC^{(d)}}}{m_{\EC}} \times \frac{c^2_{\EC}}{c^2_{\EC^{(d)}}} \times \frac{1}{q} 
\end{align*}
In particular, we have the relation
\begin{equation}\label{eq:constants-relation}
2^{k-m}{(q-1)(q+1 - a_q(\EC))(q+1 + a_q(\EC))} = \frac{m_{\EC^{(d)}}}{m_{\EC}} \times \frac{c^2_{\EC}}{c^2_{\EC^{(d)}}}.
\end{equation}

The ranks of the quadratic twists $\EC^{(d)}$ are bounded .

\begin{Lemma}\label{lem:rank-4-bound}
Let $\EC$ be any Ivorra curve.
Then for any prime $q$, the quadratic twists $\EC^{(q)}$ and $\EC^{(-q)}$ have rank at most 4.
\end{Lemma}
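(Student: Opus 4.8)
The plan is to apply the second rank bound from Proposition~\ref{prop:naive-rank-bound-aguirre}, namely inequality~\eqref{eq:naive-rank-bound-additive-mult}, to each twist $\EC^{(d)}$ with $d = \pm q$. To invoke it I must first check that $\EC^{(d)}$ carries a rational point of order two, and then count its primes of additive and multiplicative reduction.

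For the first point, note that quadratic twisting does not disturb the $2$-torsion: if $\EC \colon y^2 = f(x)$, then $\EC^{(d)} \colon dy^2 = f(x)$ has the \emph{same} cubic $f$, so its nontrivial $2$-torsion still comes from the roots of $f$, giving $\EC^{(d)}(\Q)[2] \neq 0$. Hence Proposition~\ref{prop:naive-rank-bound-aguirre} applies to $\EC^{(d)}$. For the second point I read the reduction types directly off the conductor formula $N^{(d)} = 2^k p q^2$ with $k \geq m \geq 2$ recalled just above. Since $\ord_2(N^{(d)}) = k \geq 2$ and $\ord_q(N^{(d)}) = 2$, the twist has additive reduction at both $2$ and $q$; since $\ord_p(N^{(d)}) = 1$, it has multiplicative reduction at $p$. (Here I am using that an Ivorra curve, having conductor $2^m p$ with $\ord_p = 1$, already has multiplicative reduction at $p$, and this is preserved under twisting by $d = \pm q$, which is a unit at $p$.) Thus $\EC^{(d)}$ has exactly two primes of additive reduction, $2$ and $q$, and exactly one prime of multiplicative reduction, $p$.

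Applying \eqref{eq:naive-rank-bound-additive-mult} with $a = 2$ and $m = 1$ then yields
\[
\mathrm{rank}_\Z \EC^{(d)}(\Q) \leq m + 2a - 1 = 1 + 2 \cdot 2 - 1 = 4,
\]
which is the desired bound; the same count applies verbatim to both $d = q$ and $d = -q$. The only step requiring genuine care is the bookkeeping of reduction types — in particular confirming that twisting by the odd prime $q$ really produces additive reduction at $q$ (reflected by the factor $q^2$) while leaving the multiplicative reduction at $p$ intact — but each of these facts is immediate from the conductor formula already in hand, so I expect no real obstacle beyond this verification.
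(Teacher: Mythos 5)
Your proof is correct and follows essentially the same route as the paper's: both establish that the rational $2$-torsion point survives twisting (since $\EC^{(d)}\colon dy^2=f(x)$ has the same cubic $f$), identify additive reduction at $2$ and $q$ and multiplicative reduction at $p$, and then apply the bound \eqref{eq:naive-rank-bound-additive-mult} with $a=2$, $m=1$. Your extra step of reading the reduction types off the conductor formula $N^{(d)}=2^k p q^2$ is a harmless elaboration of what the paper simply asserts.
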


\begin{proof}
Since $\EC \colon y^2 = f(x)$ is an Ivorra curve, it has a non-trivial $2$-torsion point $P=(X,0)$ with $X \in \Q$.
The quadratic twist is given by $\EC^{(d)} \colon dy^2 = f(x)$, so we also have $P \in \EC^{(d)}(\Q)[2] \neq 0$.
Since $\EC^{(d)}$ has additive reduction at $2$ and $q$, multiplicative reduction at $p$, and good reduction everywhere else, the result now follows from the bound \eqref{eq:naive-rank-bound-additive-mult}.
\end{proof}

In fact, we can do better when $T \in \{ \mathbf{I, II, III, IV, V, VI, VII, XI, XIII, XV}\}$.

\begin{Lemma}\label{lem:trick-rank-leq-3}
Let $\EC$ be an elliptic curve of type $T \in \{ \mathbf{I, II, III, IV, V, VI, VII, XI, XIII, XV}\}$.
Then $\operatorname{rank}_{\Z}(\EC^{(d)}) \leq 3$ where $d\in\{q,-q\}$.
\end{Lemma}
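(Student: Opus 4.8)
The plan is to apply the refined naive bound $r \leq \omega(a^2-4b) + \omega(b) - 1$ from Proposition~\ref{prop:naive-rank-bound-aguirre} directly to the twisted curve, rather than the cruder bound $r \leq m + 2a - 1$ used in Lemma~\ref{lem:rank-4-bound}. The key observation is that quadratic twisting preserves the shape of the Weierstrass model: starting from $\EC \colon y^2 = x^3 + ax^2 + bx$ and clearing denominators in $dy^2 = x^3 + ax^2 + bx$ via $X = dx$, $Y = d^2 y$, one finds that $\EC^{(d)}$ is $\Q$-isomorphic to $Y^2 = X^3 + (ad)X^2 + (bd^2)X$. Since $a$ is an integer for every Ivorra curve (as $\alpha = \sqrt{\beta} \in \Z$) and $d = \pm q$, this is again an integral model of the required form, so Proposition~\ref{prop:naive-rank-bound-aguirre} applies to it with the pair $(A,B) = (ad, bd^2)$.

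First I would record how the two relevant quantities change. We have $B = bd^2 = bq^2$ and $A^2 - 4B = d^2(a^2 - 4b) = q^2(a^2 - 4b)$. Because multiplying by $q^2$ can only introduce the single new prime $q$ into the factorization, we get $\omega(B) \leq \omega(b) + 1$ and $\omega(A^2 - 4B) \leq \omega(a^2 - 4b) + 1$, with equality precisely when $q$ does not already divide $b$, respectively $a^2 - 4b$.

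Feeding these into Proposition~\ref{prop:naive-rank-bound-aguirre} gives
\[
\operatorname{rank}_\Z \EC^{(d)}(\Q) \leq \omega(A^2 - 4B) + \omega(B) - 1 \leq \bigl(\omega(a^2-4b) + \omega(b) - 1\bigr) + 2 .
\]
Finally I would invoke the reason these particular types were singled out: as noted immediately after Proposition~\ref{prop:naive-rank-bound-aguirre}, for $T \in \{\mathbf{I},\dots,\mathbf{VII},\mathbf{XI},\mathbf{XIII},\mathbf{XV}\}$ one reads off from Table~\ref{table:Ivorra-type-classification} that $\omega(a^2-4b) + \omega(b) \leq 2$, i.e.\ the quantity $\omega(a^2-4b)+\omega(b)-1$ is at most $1$. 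Substituting yields $\operatorname{rank}_\Z \EC^{(d)}(\Q) \leq 1 + 2 = 3$, as claimed.

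I do not anticipate a genuine obstacle here: the argument is essentially bookkeeping, and the only points requiring care are confirming that the twisted model genuinely falls under the hypotheses of Proposition~\ref{prop:naive-rank-bound-aguirre} (integrality of $A$ and $B$, and the fact that $\omega$ is insensitive to the square factor $d^2$ beyond possibly adding the prime $q$). The real content of the lemma is the inequality $\omega(a^2-4b)+\omega(b) \leq 2$ for the listed types, which is already established in the discussion following Proposition~\ref{prop:naive-rank-bound-aguirre}.
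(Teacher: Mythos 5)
Your proposal is correct and follows essentially the same route as the paper's own proof: both apply the bound $r \leq \omega(a^2-4b)+\omega(b)-1$ of Proposition~\ref{prop:naive-rank-bound-aguirre} to the twisted model $y^2 = x^3 + adx^2 + bd^2x$, observe that twisting increases each $\omega$-term by at most $1$, and conclude via the inequality $\omega(a^2-4b)+\omega(b)-1 \leq 1$ for the listed types. Your handling of the edge case where $q$ divides $b$ or $a^2-4b$ (using $\leq$ rather than the paper's implicit equality) is in fact slightly more careful than the published argument.
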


\begin{proof}
Consider the elliptic curves to be written in the form
\begin{align*}
\EC: y^2 &= x^3 + ax^2 + bx\\
\EC': y^2 &= x^3 - 2ax^2 + (a^2 -4b)x.
\end{align*}
By \cite[Proposition~1.1]{aguirre2008elliptic}, and the fact that isogenous elliptic curves have the same Mordell--Weil rank, we know that
\[
\operatorname{rank}_{\Z}(\EC') = \operatorname{rank}_{\Z}(\EC) \leq \omega(a^2 -4b) + \omega(b) -1.
\]
The quadratic twist of $\EC$ and $\EC'$ by $q$ is given by,
\begin{align*}
\EC^{(d)}: y^2 &= x^3 + aq x^2 + bq^2 x\\
\EC'^{(d)}: y^2 &= x^3 - 2aq x^2 + (a^2 - 4b)q^2 x.
\end{align*}
Once again using \cite[Proposition~1.1]{aguirre2008elliptic} we obtain
\begin{align*}
\operatorname{rank}_{\Z}(\EC^{(d)}) &\leq \omega((a^2-4b)q^2) + \omega(bq^2) - 1\\
& = \omega(a^2 - 4b) + 1 + \omega(b) + 1 -1\\
& = \omega(a^2 - 4b)  + \omega(b) + 1.
\end{align*}
One may argue similarly for $\operatorname{rank}_{\Z}(\EC'^{(d)})$.
The result now follows by direct check (see Table~\ref{table:Ivorra-type-classification}).
\end{proof}

We now leverage the relationship given by equation~\eqref{eq:constants-relation} to prove Watkins' Conjecture for some quadratic twists of Ivorra curves.
We begin with the families of curves covered by Lemma~\ref{lem:trick-rank-leq-3}.

\begin{Lemma}
\label{Lemma 4.3}\label{lem:watkins-trick-twists}
Let $\EC$ be an elliptic curve of type $T \in \{ \mathbf{I, II, III, IV, V, VI, VII, XI, XIII, XV}\}$ 
and further suppose that  $c_{\EC}$ is equal to 1.
Let $q\ge 5$ be a prime of good reduction for $\EC$.
Then Watkin's Conjecture holds for ${\EC^{(d)}}$.
\end{Lemma}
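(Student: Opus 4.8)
The plan is to read off the $2$-adic valuation of $m_{\EC^{(d)}}$ directly from the identity~\eqref{eq:constants-relation} and compare it against the Mordell--Weil rank of the twist. Since the type $T$ lies in the list covered by Lemma~\ref{lem:trick-rank-leq-3}, we already know that $r := \operatorname{rank}_\Z \EC^{(d)}(\Q) \le 3$, so Watkins' Conjecture for $\EC^{(d)}$ reduces to the single inequality $\ord_2\bigl(m_{\EC^{(d)}}\bigr) \ge 3$.

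To reach $\ord_2\bigl(m_{\EC^{(d)}}\bigr)$, I would solve~\eqref{eq:constants-relation} for $m_{\EC^{(d)}}$. Substituting the standing hypothesis $c_\EC = 1$ gives
\[
m_{\EC^{(d)}} = m_\EC \cdot c_{\EC^{(d)}}^2 \cdot 2^{k-m}(q-1)\bigl(q+1-a_q(\EC)\bigr)\bigl(q+1+a_q(\EC)\bigr),
\]
and I would then take $\ord_2$ of both sides. The contributions $\ord_2(m_\EC)$, $2\ord_2(c_{\EC^{(d)}})$, and $k-m$ are all nonnegative --- the last because $k \ge m$ for these twists --- so they can only increase the valuation and may safely be discarded for a lower bound.

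The key observation, and the place where the hypotheses do the real work, is that the three remaining factors are each even. Indeed $q \ge 5$ is an odd prime, so $q-1$ and $q+1$ are both even; and by Lemma~\ref{even aq} the nontrivial rational $2$-torsion of the Ivorra curve forces $a_q(\EC)$ to be even, whence $q+1-a_q(\EC)$ and $q+1+a_q(\EC)$ are even as well. By the Hasse bound these three factors are also nonzero, so there is no degeneracy in the valuation. Consequently the product contributes at least $2^3$, and assembling the valuations yields $\ord_2\bigl(m_{\EC^{(d)}}\bigr) \ge 3 \ge r$, which is precisely Watkins' Conjecture for $\EC^{(d)}$.

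I do not anticipate a genuine obstacle here: once Lemma~\ref{lem:trick-rank-leq-3} supplies the rank bound of $3$ and Lemma~\ref{even aq} supplies the parity of $a_q(\EC)$, the argument is a short valuation count. The only points requiring care are bookkeeping ones --- confirming that every exponent appearing on the right-hand side of~\eqref{eq:constants-relation} is nonnegative (in particular that $k \ge m$), and that neither the factor $2^{k-m}$ nor the unknown Manin constant $c_{\EC^{(d)}}$ can decrease the valuation. The elegance of the statement is that the rank bound is exactly $3$ and the automatic factor of $2^3$ coming from $(q-1)\bigl(q+1-a_q(\EC)\bigr)\bigl(q+1+a_q(\EC)\bigr)$ matches it precisely.
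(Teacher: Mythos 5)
Your proposal is correct and follows essentially the same route as the paper's proof: both rely on equation~\eqref{eq:constants-relation} with $c_\EC=1$, the parity of $a_q(\EC)$ from Lemma~\ref{even aq} to extract the factor $2^3$ from $(q-1)\bigl(q+1-a_q(\EC)\bigr)\bigl(q+1+a_q(\EC)\bigr)$, and the rank bound of Lemma~\ref{lem:trick-rank-leq-3}. Your write-up is in fact slightly more careful than the paper's (noting $k\ge m$, the nonnegativity of the Manin-constant contribution, and nondegeneracy via Hasse), but it is the same argument.
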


\begin{proof}
Since $a_q(\EC) = 2\theta$ by Lemma~\ref{even aq}, we know that
\[
2^{k-m}{(q-1)(q+1 - 2\theta)(q+1 + 2\theta)} m_\EC = {m_{\EC^{(d)}}} \times \frac{1}{c^2_{\EC^{(d)}}}.
\]
The LHS of the above equation is divisible by $2^3$.
Thus, Watkin's Conjecture holds for $\EC^{(d)}$, since by Lemma~\ref{lem:trick-rank-leq-3} the rank of $\EC^{(d)}$ is at most 3.
\end{proof}

\begin{Lemma}
\label{Lemma 4.4}
Let $\EC$ be an elliptic curve of type $T \in \{ \mathbf{I, II, III, IV, V, VI, VII, XI, XIII, XV}\}$ such that $c_{\EC}>1$.
Then Watkin's Conjecture holds for ${\EC^{(d)}}$ in the following cases
\begin{enumerate}
    \item[\textup{(}i\textup{)}] $q\equiv 1\pmod{8}$ is a prime of good reduction of $\EC$.
    \item[\textup{(}ii\textup{)}] $q\equiv 1\pmod{4}$ is a prime of good reduction of $\EC$, the Mordell--Weil rank of $\EC$ is \textit{exactly 1}, and Watkin's Conjecture holds for $\EC$.
    \item[\textup{(}iii\textup{)}] $q\equiv 3\pmod{4}$ is a prime of good supersingular reduction of $\EC$.
\end{enumerate}
\end{Lemma}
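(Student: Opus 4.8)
The plan is to run the same $2$-adic bookkeeping as in Lemma~\ref{lem:watkins-trick-twists}, but now compensating for the extra factor of $c_\EC^2$ that appears on the right-hand side of \eqref{eq:constants-relation} once $c_\EC>1$. The discussion preceding Remark~\ref{rmk:manin-constant-1} shows that the Manin constant of either curve in a $2$-isogenous pair lies in $\{1,2\}$; in particular $c_\EC>1$ forces $c_\EC=2$, so $\ord_2(c_\EC)=1$. Solving \eqref{eq:constants-relation} for $m_{\EC^{(d)}}$ and taking $2$-adic valuations, I would record the single master identity
\[
\ord_2\!\big(m_{\EC^{(d)}}\big) = (k-m) + \ord_2\!\big((q-1)(q+1-a_q(\EC))(q+1+a_q(\EC))\big) + \ord_2(m_\EC) + 2\ord_2(c_{\EC^{(d)}}) - 2.
\]
Because $k\ge m$ and $\ord_2(c_{\EC^{(d)}})\ge 0$, it suffices in every case to prove that $\ord_2\big((q-1)(q+1-a_q)(q+1+a_q)\big) + \ord_2(m_\EC) \ge 5$; this forces $\ord_2(m_{\EC^{(d)}})\ge 3$, and since $\operatorname{rank}_\Z \EC^{(d)}\le 3$ by Lemma~\ref{lem:trick-rank-leq-3}, Watkins' Conjecture for $\EC^{(d)}$ follows.

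The remaining work is to verify this $2$-adic lower bound in each of the three cases, using that $a_q(\EC)$ is even (Lemma~\ref{even aq}), so that each of the three factors $(q-1)$, $(q+1-a_q)$, $(q+1+a_q)$ is even and the product already has valuation $\ge 3$. In case (i), $q\equiv 1\pmod 8$ gives $\ord_2(q-1)\ge 3$ while the two remaining factors contribute $\ge 1$ each, so the product alone has valuation $\ge 5$. In case (ii), $q\equiv 1\pmod 4$ gives only $\ord_2(q-1)\ge 2$, hence product-valuation $\ge 4$; here I would invoke the hypotheses that $\operatorname{rank}_\Z \EC=1$ and that Watkins holds for $\EC$ to conclude $\ord_2(m_\EC)\ge 1$, recovering the total $\ge 5$. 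In case (iii), good supersingular reduction at $q$ forces $a_q(\EC)=0$ (by the Hasse bound when $q\ge 5$, and by Lemma~\ref{even aq} together with $q\mid a_q$ in the small remaining case), so the product becomes $(q-1)(q+1)^2$; then $q\equiv 3\pmod 4$ yields $\ord_2(q-1)=1$ and $\ord_2(q+1)\ge 2$, giving valuation $\ge 1+4=5$.

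I do not expect a genuine obstacle here: once the master identity is in place, each case reduces to an elementary count of powers of $2$ dictated by the congruence class of $q$ modulo $8$. The one point requiring care is the justification that $c_\EC>1$ forces $\ord_2(c_\EC)=1$ rather than a larger value, which is exactly the content of the relation $c'=\delta c$ with $\delta\mid 2$ recorded before Remark~\ref{rmk:manin-constant-1}; this is what guarantees that dividing by $c_\EC^2$ costs precisely $2^2$ and no more, and hence that the extra $2$-adic room supplied by the congruence conditions on $q$ (or by Watkins for $\EC$ in case (ii)) is enough to restore $\ord_2(m_{\EC^{(d)}})\ge 3$. The other mild subtlety is the supersingular identification $a_q(\EC)=0$ in case (iii), which must be argued rather than assumed, but this is standard.
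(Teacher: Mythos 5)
Your proof is correct and follows essentially the same route as the paper: specialize \eqref{eq:constants-relation} using $c_\EC>1\Rightarrow c_\EC=2$, check that the hypotheses of each case force $2$-adic valuation at least $5$ on the left-hand side (namely $3+1+1$ in case (i), $2+1+1$ plus $\ord_2(m_\EC)\geq 1$ in case (ii), and $1+2+2$ from $a_q=0$ in case (iii)), and conclude $2^3 \mid m_{\EC^{(d)}}$, which suffices by the rank bound of Lemma~\ref{lem:trick-rank-leq-3}. If anything, your accounting is more careful than the paper's own write-up, which asserts only that the left-hand side is divisible by $2^3$ (where $2^5$ is what is needed and what the hypotheses actually deliver) and leaves the appeal to the rank-at-most-$3$ bound for the twist implicit.
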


\begin{proof}
Recall from Theorem~\ref{thm:rank-bound-for-ivorra-curves} that the Mordell--Weil rank is $\le 1$ in this case.
Since $a_q(\EC) = 2\theta$, we know that
\[
(q-1)(q+1 - 2\theta)(q+1 + 2\theta) m_\EC = {m_{\EC^{(d)}}} \times \frac{4}{c^2_{\EC^{(d)}}}.
\]
On the other hand, when $a_q(\EC) = 0$, we know that
\[
(q-1)(q+1)(q+1) m_\EC = {m_{\EC^{(d)}}} \times \frac{4}{c^2_{\EC^{(d)}}}.
\]
In each of the cases, the LHS is divisible by $2^3$.
Thus, Watkin's Conjecture holds for $\EC^{(q)}$.
\end{proof}

We may use the same technique to deduce Watkins' Conjecture for twists of other Ivorra curves (such as those appearing in Theorem~\ref{thm:rank-bound-for-ivorra-curves}) in many cases.

\begin{Lemma}
\label{Lemma 4.5}
Let $\EC$ be an elliptic curve of conductor $2^m p$ with a non-trivial 2-torsion.
Further suppose that  $c_{\EC}$ is equal to 1.
Then Watkin's Conjecture holds for ${\EC^{(d)}}$ under either of the following conditions :
\begin{enumerate}
    \item[\textup{(}i\textup{)}] $q\ge 5$ is a prime of good supersingular reduction of $\EC$.
    \item[\textup{(}ii\textup{)}] $q\equiv 1\pmod{4}$ is a prime of good ordinary reduction of $\EC$.
\end{enumerate}
\end{Lemma}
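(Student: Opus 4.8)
The plan is to feed the case hypotheses into the fundamental relation \eqref{eq:constants-relation}, exactly as in the proofs of Lemmas~\ref{lem:watkins-trick-twists} and \ref{Lemma 4.4}. Setting $c_\EC = 1$ in \eqref{eq:constants-relation} and solving for the modular degree of the twist, I would write
\[
m_{\EC^{(d)}} = 2^{k-m}(q-1)\bigl(q+1 - a_q(\EC)\bigr)\bigl(q+1 + a_q(\EC)\bigr)\, m_\EC\, c^2_{\EC^{(d)}},
\]
and read off
\[
\ord_2\bigl(m_{\EC^{(d)}}\bigr) = (k-m) + \ord_2\!\bigl((q-1)(q+1-a_q(\EC))(q+1+a_q(\EC))\bigr) + \ord_2(m_\EC) + 2\,\ord_2(c_{\EC^{(d)}}).
\]
Since $k \geq m$ and every remaining summand is non-negative, the whole problem reduces to bounding the single term $\ord_2\bigl((q-1)(q+1-a_q(\EC))(q+1+a_q(\EC))\bigr)$ from below.

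Next I would recall the rank bound: the twist $\EC^{(d)}$ retains the rational $2$-torsion point, has additive reduction at $2$ and at $q$ (its conductor being $2^k p q^2$) and multiplicative reduction at $p$, so \eqref{eq:naive-rank-bound-additive-mult} gives $\operatorname{rank}_\Z \EC^{(d)}(\Q) \leq 1 + 2\cdot 2 - 1 = 4$; this is precisely Lemma~\ref{lem:rank-4-bound}. Hence Watkins' Conjecture for $\EC^{(d)}$ will follow the moment I establish $\ord_2\bigl(m_{\EC^{(d)}}\bigr) \geq 4$, and by the previous paragraph it is enough to prove
\[
\ord_2\!\bigl((q-1)(q+1-a_q(\EC))(q+1+a_q(\EC))\bigr) \geq 4
\]
in each of the two cases.

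The two cases then reduce to elementary $2$-adic counting. In case (i), because $q \geq 5$ is prime with supersingular reduction, the Hasse bound forces $a_q(\EC) = 0$, so the product becomes $(q-1)(q+1)^2$; exactly one of the consecutive even integers $q-1,\,q+1$ is divisible by $4$, and a short check shows $\ord_2\bigl((q-1)(q+1)^2\bigr) \geq 4$ whether $q \equiv 1$ or $3 \pmod 4$. In case (ii), the non-trivial $2$-torsion makes $a_q(\EC)$ even (via $\EC(\Q)_{\mathrm{tors}} \hookrightarrow \widetilde{\EC}(\mathbb{F}_q)$, as in Lemma~\ref{even aq}), so both of $q+1 \pm a_q(\EC)$ are even, while $q \equiv 1 \pmod 4$ supplies $\ord_2(q-1) \geq 2$; adding the contributions gives at least $2+1+1 = 4$, as required.

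I do not anticipate a serious obstacle; the statement is a clean synthesis of the valuation identity \eqref{eq:constants-relation} with the coarse rank bound. The only delicate point is that case (ii) genuinely needs $q \equiv 1 \pmod 4$ rather than merely $q \equiv 3 \pmod 4$: an ordinary $a_q(\EC)$ congruent to $2 \pmod 4$ would let each factor $q+1 \pm a_q(\EC)$ carry only a single power of $2$, so for $q \equiv 3 \pmod 4$ the product could attain $2$-adic valuation exactly $3$, one short of the bound needed to cover rank-$4$ twists. This is precisely why the supersingular hypothesis, which kills $a_q(\EC)$ entirely, is what allows case (i) to dispense with any congruence condition on $q$.
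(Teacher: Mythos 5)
Your proposal is correct and follows essentially the same route as the paper's proof: both feed $c_\EC=1$ into the relation \eqref{eq:constants-relation}, bound the rank of $\EC^{(d)}$ by $4$ via Lemma~\ref{lem:rank-4-bound}, and then verify $\ord_2\bigl((q-1)(q+1-a_q(\EC))(q+1+a_q(\EC))\bigr)\geq 4$ by splitting into the supersingular case ($a_q(\EC)=0$, giving valuation $4$ or $5$ according to $q \bmod 4$) and the ordinary case ($a_q(\EC)$ even and $q\equiv 1 \pmod 4$). Your only additions — making explicit that the Hasse bound forces $a_q(\EC)=0$ for supersingular $q\geq 5$, and the closing remark on why $q\equiv 3\pmod 4$ cannot be allowed in case (ii) — are correct and merely spell out what the paper leaves implicit.
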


\begin{proof}
When $a_q(\EC) = 0$, we know that
\[
2^{k-m}{(q-1)(q+1)(q+1)} m_\EC = {m_{\EC^{(d)}}} \times \frac{1}{c^2_{\EC^{(d)}}}.
\]
If $q\equiv 1\pmod{4}$ then the LHS is divisible by $2^4$ whereas, if $q\equiv 3\pmod{4}$ then the LHS is divisible by $2^5$.
In either case, Watkin's Conjecture holds for $\EC^{(d)}$ by Lemma~\ref{lem:rank-4-bound}.

On the other hand, when $a_q(\EC) = 2\theta$ (but not 0), we know that
\[
{2^{k-m}(q-1)(q+1 - 2\theta)(q+1 + 2\theta)} m_\EC = {m_{\EC^{(d)}}} \times \frac{1}{c^2_{\EC^{(d)}}}.
\]
The condition $q\equiv 1\pmod{4}$ forces the LHS to be divisible by $2^4$, and the result once again follows from Lemma~\ref{lem:rank-4-bound}.
\end{proof}

Putting the lemmas from this section together, we have proven the following theorem.

\begin{theorem}
\label{watkin's for twist using petterson norm}
Let $\EC/\Q$ be an Ivorra curve and $q\geq 5$ be a prime of good reduction of $\EC/\Q$.
Then Watkins' conjecture holds for the quadratic twists $\EC^{(\pm q)}/\Q$ in the following cases.
\begin{enumerate}
\item $\EC$ is of type $T \in \{ \mathbf{I, II, III, IV, V, VI, VII, XI, XIII, XV}\}$ and $c_\EC=1$
\item $\EC$ is of type $T \in \{ \mathbf{I, II, III, IV, V, VI, VII, XI, XIII, XV}\}$ with $c_\EC >1$ and either $q \equiv 1 \pmod 8$, or $q$ is a prime of supersingular reduction for $\EC$ with $q \equiv 3 \pmod 4$
\item $\EC$ is of any Ivorra type, $c_\EC=1$, and either $q$ is a prime of supersingular reduction for $\EC$, or $q \equiv 1 \pmod 4$
\end{enumerate}
\end{theorem}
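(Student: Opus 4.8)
The plan is to observe that Theorem~\ref{watkin's for twist using petterson norm} is simply the consolidation of Lemmas~\ref{lem:watkins-trick-twists},~\ref{Lemma 4.4}, and~\ref{Lemma 4.5}, so the proof amounts to matching each numbered case to the lemma that disposes of it. Before doing so I would isolate the single mechanism common to all three. Rearranging the relation~\eqref{eq:constants-relation} to solve for the modular degree of the twist gives
\[
m_{\EC^{(d)}} \;=\; 2^{k-m}\,(q-1)\bigl(q+1-a_q(\EC)\bigr)\bigl(q+1+a_q(\EC)\bigr)\,m_{\EC}\,\frac{c_{\EC^{(d)}}^{2}}{c_{\EC}^{2}},
\]
so that Watkins' Conjecture for $\EC^{(d)}$ reduces to the arithmetic inequality $\ord_2\!\left(m_{\EC^{(d)}}\right) \ge \mathrm{rank}_\Z \EC^{(d)}(\Q)$. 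By Lemma~\ref{even aq} we may write $a_q(\EC)=2\theta$, which makes each of the three factors $q-1$, $q+1-2\theta$, $q+1+2\theta$ even; the congruence hypotheses on $q$ in the statement are exactly what is needed to force additional $2$-divisibility in these factors (e.g. $\ord_2(q-1)\ge 3$ when $q\equiv 1\pmod 8$, and $\ord_2(q+1)\ge 2$ when $q\equiv 3\pmod 4$, with a supersingular prime contributing the square $(q+1)^2$ since then $a_q(\EC)=0$).

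With this in hand I would treat the three cases in turn. For case (1) the hypothesis $c_\EC=1$ removes the denominator $c_\EC^2$, the three even factors already supply $\ord_2\ge 3$, and Lemma~\ref{lem:trick-rank-leq-3} bounds the twist's rank by $3$; this is precisely Lemma~\ref{lem:watkins-trick-twists}. For case (2), where $T$ lies in the same list but $c_\EC>1$ (so $c_\EC=2$ and we lose a factor $2^2$ to the denominator), the stronger congruences $q\equiv 1\pmod 8$ or ``$q\equiv 3\pmod 4$ supersingular'' boost the left-hand side to $\ord_2\ge 5$, recovering $\ord_2\!\left(m_{\EC^{(d)}}\right)\ge 3$ against the same twist-rank bound of Lemma~\ref{lem:trick-rank-leq-3}; these are parts (i) and (iii) of Lemma~\ref{Lemma 4.4}. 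For case (3), valid for every Ivorra type, the weaker rank bound $\mathrm{rank}_\Z \EC^{(d)}(\Q)\le 4$ of Lemma~\ref{lem:rank-4-bound} must be matched, and the hypotheses ($q$ supersingular, or $q\equiv 1\pmod 4$) together with $c_\EC=1$ guarantee $\ord_2\ge 4$; this is exactly Lemma~\ref{Lemma 4.5}.

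The step that requires the most care, and where the genuine content of the section resides, is the interaction between the Manin constant $c_\EC$ and the rank bound. Passing to the twist costs a factor $c_\EC^2$ in the denominator, so the larger value $c_\EC=2$ consumes two powers of $2$ that must be paid back by congruence conditions on $q$; simultaneously, the coarser the rank bound (rank $\le 4$ for a general Ivorra type versus rank $\le 3$ for the distinguished list), the more $2$-divisibility one must extract from $(q-1)(q+1-2\theta)(q+1+2\theta)$. The remaining work is purely the bookkeeping of verifying that, in each of the three enumerated scenarios, the guaranteed $2$-adic valuation on the left meets or exceeds the applicable rank bound, and that the union of the three cases is exactly what the theorem claims; since all of these estimates have already been carried out in the three lemmas, no new computation is required and the proof is a direct citation of them.
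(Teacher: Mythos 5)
Your proposal is correct and follows exactly the paper's own route: the paper proves this theorem simply by asserting that it is the combination of Lemmas~\ref{lem:watkins-trick-twists}, \ref{Lemma 4.4}, and \ref{Lemma 4.5}, and your case-by-case matching (case (1) to Lemma~\ref{lem:watkins-trick-twists}, case (2) to parts (i) and (iii) of Lemma~\ref{Lemma 4.4} against the rank-$3$ bound of Lemma~\ref{lem:trick-rank-leq-3}, case (3) to Lemma~\ref{Lemma 4.5} against the rank-$4$ bound of Lemma~\ref{lem:rank-4-bound}) is precisely that consolidation. Your accounting of the $2$-adic valuations, including the cost of $c_\EC^2=4$ and the compensating congruences on $q$, is consistent with (indeed slightly more explicit than) the computations inside those lemmas.
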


Table~\ref{table 4} summarizes\footnote{The family of elliptic curves $\{\mathbf{I, II, III, IV, V, VI, VII, XI, XIII, XV}\}$ is together referred to as the \emph{trick family}.}  the cases where Watkin's Conjecture holds for the quadratic twist $\EC^{(\pm q)}/\Q$.

\begin{table}[h!]
    \centering
\caption{Cases where Watkin's Conjecture holds for the quadratic twist $\EC^{(\pm q)}$.
}
\label{table 4}

    \begin{tabular}{| c || c | p{2.5cm} | c |}
         \hline
         \emph{family} & \emph{Condition on} $c_{\EC}$ & \emph{Condition on } $q$ & \emph{Proof} \\
         \hline   
         \hline
         \emph{trick} & $c_{\EC}=1$ & - & \emph{Lemma~\ref{Lemma 4.3}}\\
         \hline
         \emph{trick} & $c_{\EC}>1$ & $q\equiv 1\pmod{8}$ & \emph{Lemma~\ref{Lemma 4.4}}\\
         \hline
         \emph{trick} & $c_{\EC}>1$ & $q\equiv 3\pmod{4}$; \newline
         \emph{supersingular} & \emph{Lemma~\ref{Lemma 4.4}}\\
         \hline \hline
         \emph{all} & $c_{\EC}=1$ & \emph{supersingular} & \emph{Lemma~\ref{Lemma 4.5}}\\
         \hline
         \emph{all} & $c_{\EC}=1$ & $q\equiv 1\pmod{4}$ & \emph{Lemma~\ref{Lemma 4.5}}\\
         \hline
    \end{tabular}
\end{table}

\begin{remark}
Cases which remain unaddressed by our previous theorem are the following:
\begin{enumerate}
\item $\EC$ is a rank 0 Ivorra curve in the trick family with $c_{\EC} >1$ and
\begin{enumerate}
\item $q \equiv 5\pmod{8}$ \emph{or}
\item $q \equiv 3\pmod{4}$ and $a_q(\EC)\neq 0$.
\end{enumerate}
\item $\EC$ is an Ivorra curve \textbf{not} in the trick family and
\begin{enumerate}
    \item $c_{\EC} =1$, $q \equiv 3\pmod{4}$, and $a_q(\EC)\neq 0$ \emph{or}
    \item $c_{\EC} >1$ and $\EC$ has rank 0.
\end{enumerate}
\end{enumerate}
Regarding case (1), our computational experiments have only ever found Ivorra curves with $c_\EC>1$ when the curve is of type $\textbf{I}$, and furthermore, each such curve has had rank 0, as expected (see Remark~\ref{rmk:manin-constant-1}).

One possible strategy for handling some of these missing cases would be to use the results of \cite{KrizLi}, but their Hypothesis ($\star$) appears not to hold for Ivorra curves.
Even though several authors have studied the question of ranks in quadratic twist families, it appears that the behavior of quadratic twists by primes $q \equiv 3 \pmod 4$ is especially challenging, in general.
For instance, the results of \cite{CaiLiZhai} work only for twists by primes $q \equiv 1 \pmod 4$, and the hypotheses of \cite[Theorem~1.5]{Zhai}, while widely applicable, seem never to be satisfied by Ivorra curves.
\end{remark}

\appendix
\section{Missing Cases of (Weaker) Watkins' Conjecture}\label{appendix:clarify-caro}

A weaker version of Watkins' conjecture asserts that 
\begin{equation}
\label{A.1}
 2 \nmid m_{\EC} \Longrightarrow \rk_\Z(\EC(\Q))= 0. 
\end{equation}
In the introduction of \cite{caro2022watkins}, Caro claims that this conjecture is largely settled, with a few restrictive cases remaining. We now clarify this claim. 

\begin{proposition}
Let $\EC / \Q$ be an elliptic curve of conductor $N$ 
and odd modular degree $m_\EC$.
Then 
the missing cases of \eqref{A.1} are when \emph{all} the following conditions are satisfied 
\begin{itemize}
\item $N$ is divisible by at most two odd primes,  
\item $\EC$ has additive reduction at $2$, and
\item $\EC$ has a rational point of order $2$.
\end{itemize}

\begin{proof}
First, since we are assuming that $2 \nmid m_{\EC}$, it follows from \cite[Theorem~1.1(1)]{CE09} that $N$ is divisible by at most two odd primes. 

Next, if $4 \nmid N$, then \cite[Theorem~2.1]{agashe2012modular} asserts that $m_\EC$ is equal to the congruence number $\delta_\EC$ of $\EC$.
Thus $\delta_\EC$ is odd, but then the theorem of \cite{KKwatkins} implies that $\mathrm{rank}_\Z(\EC(\Q))=0$.
So the missing case is when $4 \mid N$, hence $\EC$ has additive reduction at $2$.

These deductions put us in case (a) of \cite[Theorem~1.1(3)]{CE09}, hence $\EC(\Q)[2]$ is non-trivial.
\end{proof}

\end{proposition}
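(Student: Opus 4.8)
The plan is to establish the three conditions in order, leaning on the cited results to progressively eliminate possibilities until only the claimed configuration remains. The hypothesis throughout is that $2 \nmid m_\EC$ \emph{and} $\rk_\Z(\EC(\Q)) > 0$, since \eqref{A.1} is an implication: the ``missing cases'' are precisely the curves where the implication could conceivably fail, i.e.\ where the hypothesis $2 \nmid m_\EC$ holds but we cannot yet rule out positive rank. So the strategy is to assume $2 \nmid m_\EC$ and positive rank and show that such a curve must have all three listed properties.

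First I would pin down the number of odd prime divisors of $N$. The main input here is the classification in \cite[Theorem~1.1]{CE09}, which constrains elliptic curves of odd modular degree. Part (1) of that theorem should directly give that $N$ is divisible by at most two odd primes, so this step is essentially a citation. Second, I would rule out $4 \nmid N$. The tool is \cite[Theorem~2.1]{agashe2012modular}, which identifies the modular degree $m_\EC$ with the congruence number $\delta_\EC$ precisely when $4 \nmid N$; combined with the oddness of $m_\EC$, this forces $\delta_\EC$ to be odd, at which point the main theorem of \cite{KKwatkins} yields $\rk_\Z(\EC(\Q)) = 0$. This contradicts our standing assumption of positive rank, so the only surviving possibility is $4 \mid N$, which is exactly the statement that $\EC$ has additive reduction at $2$.

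Having established $4 \mid N$ together with at most two odd primes dividing $N$ and odd $m_\EC$, the final step is to extract the rational $2$-torsion. Here I would return to \cite[Theorem~1.1(3)]{CE09}: the conditions just accumulated should place the curve squarely in case (a) of that part, whose conclusion is that $\EC(\Q)[2]$ is nontrivial, equivalently that $\EC$ has a rational point of order $2$.

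The main obstacle I anticipate is not any hard computation but rather verifying that the hypotheses genuinely funnel into case (a) of \cite[Theorem~1.1(3)]{CE09} and no other case, since that theorem presumably enumerates several mutually exclusive configurations. One must check carefully that the combination ``$2 \nmid m_\EC$, positive rank, $4 \mid N$, at most two odd primes'' is incompatible with every case of that trichotomy except (a). Once that case-matching is confirmed, the proposition follows immediately, with the positive-rank assumption implicitly present throughout (since a rank-$0$ curve satisfies \eqref{A.1} trivially and is therefore not a ``missing case'' in the first place).
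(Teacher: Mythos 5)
Your proposal is correct and follows essentially the same route as the paper: citing \cite[Theorem~1.1(1)]{CE09} for the bound on odd prime divisors, using \cite[Theorem~2.1]{agashe2012modular} together with \cite{KKwatkins} to force $4 \mid N$ (additive reduction at $2$) under the positive-rank assumption, and then invoking case (a) of \cite[Theorem~1.1(3)]{CE09} for the rational $2$-torsion. Your explicit framing of the "missing cases" as curves with odd modular degree and positive rank, and your caution about verifying that the accumulated hypotheses land in case (a) and no other case of that classification, are both sound readings of what the paper leaves implicit.
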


\section{Complete $2$-descent proof for Theorem~\ref{thm:rank-bound-for-ivorra-curves}}
\label{appendix}
We now give the full proof of Theorem~\ref{thm:rank-bound-for-ivorra-curves}.

\begin{proof}
Consider the curves
\begin{align*}
{\EC} \colon & y^2 =x^3 + ax^2 + bx \\
{\EC}' \colon & y^2 = x^3 - 2ax^2 + (a^2-4b)x.
\end{align*}
Both ${\EC}(\Q)[2]$ and ${\EC}'(\Q)[2]$ consist of $K=\{ \mathcal{O}, (0,0) \}$, and there is a $2$-isogeny $\phi \colon {\EC} \to {\EC}'$ with kernel $K$.
Let $\phi'$ denote the dual isogeny.
Since ${\EC}$ and ${\EC}'$ are $2$-isogenous, we have 
\[
\mathrm{rank}_\Z {\EC}(\Q)=\mathrm{rank}_\Z {\EC}'(\Q).
\]

To bound these ranks, we use $2$-descent.
By \cite[Proposition~1.1]{aguirre2008elliptic}, case (1) is immediate, so we focus on the remaining cases and carry out the $2$-descent more explicitly, following the method outlined in \cite[Chapter~X]{silverman}.

Each of these curves has conductor $2^m p$ for some integer $m \geq 2$.
Let $\Sigma =\{\pm 1, \pm 2, \pm p, \pm 2p\}$.
To each $d \in \Sigma$ we have the associated homogeneous spaces
\begin{align*}
C_d \colon & dw^2 = d^2-2 a d z^2 + (a^2-4b) z^4 \\
C_d' \colon  & dw^2 = d^2+4 a d z^2 -16b z^4.
\end{align*}
For $\phi$ (and analogously for $\phi'$), we obtain Selmer groups
\[
\Sel^{(\phi)}({\EC}/\Q) = \{ d\in \Sigma \ | \ C_d(\Q_v) \neq \emptyset \text{ for every place } v \in \Sigma \}.
\]
In particular, $\Sel^{(\phi)}({\EC}/\Q)$ is a $2$-group.
There are injections 
\begin{equation*}
\label{eq:selmer-injection}
{\EC}'(\Q) /  \phi({\EC}(\Q)) \xhookrightarrow{\delta} \Sel^{(\phi)}({\EC}/\Q) \quad \text{and} \quad  {\EC}(\Q) /  \phi'({\EC}'(\Q)) \xhookrightarrow{\delta'} \Sel^{(\phi')}({\EC}'/\Q)
\end{equation*}
with the explicit values
\[
\delta(\mathcal{O}) = 1 \quad \text{and} \quad
\delta((0,0)) = \text{the square-free part of } a^2-4b,
\]
and similarly for $\delta'$.
We record this information in Table~\ref{table: 2-descent}, along with everything else we deduce in the rest of the descent argument.

By \cite[Equation~(5)]{aguirre2008elliptic}, we have
\begin{equation}
\label{eq:selmer-rank-bound}
r \leq \dim_{\mathbb{F}_2} \Sel^{(\phi)}({\EC}/\Q) + \dim_{\mathbb{F}_2} \Sel^{(\phi')}({\EC}'/\Q) - 2.
\end{equation}

We now compute these Selmer groups by studying the homogeneous spaces defined above.
We will not give details for the family $T=\mathbf{X}$ again as it was already included in the main text.

\begin{center}
\begin{table}[h!]
\caption{This table summarizes the findings from performing $2$-descent on each family of elliptic curves.
For each type, the first row gives information about $\Sel^{(\phi)}({\EC}/\Q)$, and the second row gives information about $\Sel^{(\phi')}({\EC'}/\Q)$ A green cell indicates a global point in the corresponding Selmer group coming from $2$-torsion.
A red cell indicates that the corresponding homogeneous space has no local solutions over the indicated field, or that using the group structure of the Selmer group it can be deduced that it does not contain this element.
Finally, a blue cell indicates that we can omit analyzing this cell to obtain our bound on the Mordell--Weil rank.}
\begin{tabular}{|c|c|c|c|c|c|c|c|c|}
\hline
T                               & \textbf{$1$}                                               & \textbf{$-1$}                                       & \textbf{$2$}                                        & \textbf{$-2$}                                       & \textbf{$p$}                                       & \textbf{$-p$}                                      & \textbf{$2p$}                                       & \textbf{$-2p$}                            \\ \hline \hline

\multirow{2}{*}{\textbf{VIII}, $k=2$}                     & \cellcolor{green!25}$\delta(\mathcal{O})$ & \cellcolor{red!25}$\R$  &  \cellcolor{red!25}$\Q_2$    &  \cellcolor{red!25}$\Q_2$       &  \cellcolor{green!25}$\delta(0,0)$       & \cellcolor{red!25}grp     &      \cellcolor{red!25}grp     & \cellcolor{red!25}grp \\ \cline{2-9} 
                                        & \cellcolor{green!25}$\delta(\mathcal{O})$ & \cellcolor{green!25}$\delta(0,0)$             & \cellcolor{red!25}$\Q_2$ & \cellcolor{red!25}grp             &  \cellcolor{blue!25}                   &  \cellcolor{blue!25}        & \cellcolor{red!25}$\Q_p$
                                        &  \cellcolor{red!25}grp         \\ \hline \hline
                       
\multirow{2}{*}{\textbf{X}, $k$ even}                         & \cellcolor{green!25}$\delta(\mathcal{O})$   &          \cellcolor{blue!25}           &               \cellcolor{red!25}$\Q_2$  &\cellcolor{red!25}$\Q_2$     &         \cellcolor{blue!25}           & \cellcolor{green!25}$\delta(0,0)$ &                              \cellcolor{red!25}grp          &   \cellcolor{red!25}grp            \\ \cline{2-9} 
                                          & \cellcolor{green!25}$\delta(\mathcal{O})$ & \cellcolor{red!25}$\R$             &  \cellcolor{blue!25}
                                          & 
                                          \cellcolor{red!25}$\R$             & \cellcolor{red!25}$\Q_p$          & \cellcolor{red!25}$\Q_p$          & \cellcolor{red!25}$\Q_p$           & \cellcolor{red!25}$\Q_p$ \\ \hline \hline

\multirow{2}{*}{\textbf{X}, $k$ odd}                         & \cellcolor{green!25}$\delta(\mathcal{O})$ &       \cellcolor{blue!25}              &    \cellcolor{red!25}$\Q_2$        &  \cellcolor{red!25}$\Q_2$        &          \cellcolor{blue!25}          & \cellcolor{green!25}$\delta(0,0)$ & \cellcolor{red!25}grp  &   \cellcolor{red!25}grp            \\ \cline{2-9} 
                                          & \cellcolor{green!25}$\delta(\mathcal{O})$ & \cellcolor{red!25}$\R$             & \cellcolor{green!25}$\delta(0,0)$ & \cellcolor{red!25}$\R$             & \cellcolor{red!25}$\Q_p$          & \cellcolor{red!25}$\Q_p$          & \cellcolor{red!25}$\Q_p$           & \cellcolor{red!25}$\Q_p$ \\ \hline \hline

\multirow{2}{*}{\textbf{XII}, $k=2$}                     & \cellcolor{green!25}$\delta(\mathcal{O})$ & \cellcolor{green!25}$\delta(0,0)$  &  \cellcolor{blue!25}                                              &              \cellcolor{blue!25}                                   &  \cellcolor{red!25}$\Q_p$      & \cellcolor{red!25}$\Q_p$      &      \cellcolor{red!25}$\Q_p$      & \cellcolor{red!25}$\Q_p$ \\ \cline{2-9} 
                                        & \cellcolor{green!25}$\delta(\mathcal{O})$ & \cellcolor{red!25}$\R$             & \cellcolor{green!25}$\delta(0,0)$ & \cellcolor{red!25}$\R$             &  \cellcolor{red!25}$\Q_p$                   &  \cellcolor{red!25}$\Q_p$         & \cellcolor{red!25}grp
                                        &  \cellcolor{red!25}grp         \\ \hline \hline

\multirow{2}{*}{\textbf{XIV}}                     & \cellcolor{green!25}$\delta(\mathcal{O})$ &          \cellcolor{blue!25}           &          \cellcolor{red!25}$\Q_2$           &  \cellcolor{red!25}$\Q_2$   &     \cellcolor{blue!25}      &  \cellcolor{blue!25}      &    \cellcolor{red!25}$\Q_2$    &    \cellcolor{red!25}$\Q_2$   \\ \cline{2-9} 
                                       & \cellcolor{green!25}$\delta(\mathcal{O})$ &\cellcolor{red!25}$\Q_2$                     & \cellcolor{green!25}$\delta(0,0)$  &       \cellcolor{red!25}grp              & \cellcolor{red!25}$\Q_p$          & \cellcolor{red!25}$\Q_p$          & \cellcolor{red!25}$\Q_p$           & \cellcolor{red!25}$\Q_p$ \\ \hline \hline

\multirow{2}{*}{\textbf{XVI}}                     & \cellcolor{green!25}$\delta(\mathcal{O})$ & \cellcolor{red!25}$\R$  &  \cellcolor{red!25}$\Q_2$    &  \cellcolor{red!25}$\Q_2$       &  \cellcolor{green!25}$\delta(0,0)$       & \cellcolor{red!25}grp     &      \cellcolor{red!25}grp     & \cellcolor{red!25}grp \\ \cline{2-9} 
                                        & \cellcolor{green!25}$\delta(\mathcal{O})$ & \cellcolor{blue!25}             & \cellcolor{blue!25} & \cellcolor{green!25}$\delta(0,0)$             &  \cellcolor{red!25}$\Q_p$                   &  \cellcolor{red!25}grp        & \cellcolor{red!25}grp
                                        &  \cellcolor{red!25}grp         \\ \hline

\multirow{2}{*}{\textbf{XVII}, $k=1$}                  & \cellcolor{green!25}$\delta(\mathcal{O})$ & \cellcolor{red!25}$\R$             &\cellcolor{red!25}grp     &    \cellcolor{red!25}$\R$       &    \cellcolor{red!25}$\Q_p$         &      \cellcolor{red!25}$\Q_p$      & \cellcolor{green!25}$\delta(0,0)$ & \cellcolor{red!25}grp \\ \cline{2-9} 
                                     & \cellcolor{green!25}$\delta(\mathcal{O})$ &   \cellcolor{red!25}grp                  &    \cellcolor{red!25}$\Q_2$                 & \cellcolor{green!25}$\delta(0,0)$  & \cellcolor{red!25}$\Q_p$          & \cellcolor{red!25}$\Q_p$          & \cellcolor{red!25}$\Q_p$           & \cellcolor{red!25}$\Q_p$ \\ \hline \hline

\multirow{2}{*}{\textbf{XVIII}, $k=1$}                 & \cellcolor{green!25}$\delta(\mathcal{O})$ & \cellcolor{red!25}$\Q_2$           & \cellcolor{red!25}grp              & \cellcolor{green!25}$\delta(0,0)$  &      \cellcolor{red!25}$\Q_p$   &   \cellcolor{red!25}$\Q_p$       &     \cellcolor{red!25}$\Q_p$          & \cellcolor{red!25}$\Q_p$   \\ \cline{2-9} 
                                      & \cellcolor{green!25}$\delta(\mathcal{O})$ & \cellcolor{red!25}$\R$             &     \cellcolor{blue!25}                & \cellcolor{red!25}$\R$             &   \cellcolor{blue!25}                 & \cellcolor{red!25}grp             & \cellcolor{green!25}$\delta(0,0)$  & \cellcolor{red!25}grp    \\ \hline \hline

\multirow{2}{*}{\textbf{XIX}, $k=1$}                & \cellcolor{green!25}$\delta(\mathcal{O})$ & \cellcolor{red!25}$\Q_2$           &   \cellcolor{red!25}grp  &   \cellcolor{red!25}grp       &  \cellcolor{red!25}$\Q_2$        &  \cellcolor{red!25}$\Q_2$     & \cellcolor{green!25}$\delta(0,0)$ & \cellcolor{red!25}grp \\ \cline{2-9} 
                                     & \cellcolor{green!25}$\delta(\mathcal{O})$ &   \cellcolor{blue!25}        & \cellcolor{green!25}$\delta(0,0)$  &       \cellcolor{blue!25}       & \cellcolor{red!25}$\Q_p$          & \cellcolor{red!25}$\Q_p$          & \cellcolor{red!25}$\Q_p$           & \cellcolor{red!25}$\Q_p$ \\ \hline \hline

\multirow{2}{*}{\textbf{XIX}, $k=2$}                & \cellcolor{green!25}$\delta(\mathcal{O})$ & \cellcolor{red!25}$\Q_2$           & \cellcolor{green!25}$\delta(0,0)$ & \cellcolor{red!25}grp           &    \cellcolor{red!25}$\Q_2$      & \cellcolor{red!25}$\Q_2$     &\cellcolor{red!25}grp   &\cellcolor{red!25}grp  \\ \cline{2-9} 
                                     & \cellcolor{green!25}$\delta(\mathcal{O})$ &   \cellcolor{blue!25}        & \cellcolor{green!25}$\delta(0,0)$  &      \cellcolor{blue!25}        & \cellcolor{red!25}$\Q_p$          & \cellcolor{red!25}$\Q_p$          & \cellcolor{red!25}$\Q_p$           & \cellcolor{red!25}$\Q_p$ \\ \hline

\end{tabular}
\end{table}
\label{table: 2-descent}
\end{center}

\noindent\textbf{(i)} \fbox{$d=-1 :$} Consider the homogeneous spaces 
\begin{align*}
C_{-1} \colon & -w^2 = 1 + 2 a  z^2 + (a^2-4b) z^4 \\
C'_{-1} \colon & -w^2 = 1- 4 a z^2 -16b z^4.
\end{align*}

\smallskip

\noindent \textbf{$\R$-obstruction:} For both spaces the left-hand side is always non-positive, while the right-hand side takes a positive value when $z=0$.
Viewing the right-hand side as a quadratic in $z^2$, the discriminant is \textit{negative} for $C_{-1}$ precisely when $b<0$, i.e., when $T \in \{\mathbf{VII,XVI, XVII}\}$, and the discriminant is \textit{negative} for $C'_{-1}$ precisely when $a^2-4b <0$, i.e., when $T \in \{ \mathbf{X, XII, XVIII}\}$.
In these cases, the homogeneous space has no $\R$-solutions; and $-1$ is not in the relevant Selmer group.

\smallskip
  
\noindent \textbf{$\Q_2$-obstruction:}

$\bullet$
Suppose that $T \in \{\mathbf{XVIII, XIX}\}$ and suppose we have a solution $(w,z) \in C_{-1}(\Q_2)$.
In these families we have $\ord_2(a)= \ord_{2}(\pm 4\alpha) = 2 + \epsilon$ for some nonnegative integer $\epsilon$, and $\ord_2(a^2-4b)=3$.
On the left-hand side of the equation, we have that $\ord_2(-w^2)$ is even.
Letting $j=\ord_2(z)$, the right-hand side gives 
\[
\ord_2(\textrm{RHS}) = \ord_2(1 + 2 a  z^2 + (a^2-4b) z^4 ) \geq \min\{0, (3+\epsilon)+2j,3+4j\}.
\]
Since $3+4j$ is odd, we must have $w, z \in \Z_2$.
Reducing mod $4$ then implies $1 \equiv -w^2 \pmod 4$ which is impossible, so $C_{-1}(\Q_2)=\emptyset$ for these types $T$.

$\bullet$ Consider now $T=\mathbf{XIV}$.
We have the homogeneous space 
\[
C'_{- 1} \colon -w^2 = 1 \mp 8 \alpha z^2 - 32z^4.
\]
Suppose $w,z \in \Q_2$ give a solution; then comparing $2$-adic valuations, we see that $w \in \Z_2^\times$.
If $z \in \Z_2$, then reducing mod 4 would give $-1 \equiv 1 \pmod 4$ which is false, so the only other possibility is that $z=\frac{1}{2}\zeta$ with $\zeta \in \Z_2^\times$.
Substituting and simplifying, we obtain 
\[
-w^2 = 1 \mp 2 \alpha \zeta^2 - 2 \zeta^4.
\]
Since 1 is the only odd square mod $4$, this implies
\[
\pm 2 \alpha \equiv 0 \pmod 4,
\]
but $\alpha$ is odd so this is impossible.
Thus $C'_{-1}(\Q_2)=\emptyset$.

\smallskip

\noindent \textbf{(ii)} \fbox{$d=\pm 2$:}
First, let us consider the homogeneous spaces
\[
C_{\pm2} \colon  \pm2w^2 = 4 \mp4 a z^2 + (a^2-4b) z^4.
\]

\smallskip

\noindent \textbf{$\R$-obstruction:} 
We begin by focusing on the case $d=-2$.
In this case, the left-hand side is always non-positive, while the right-hand side certainly takes positive values.
Viewed as a quadratic in $z^2$, the discriminant on the right-hand side is $64b$, so it is negative precisely when $b<0$.
In particular, this shows that $C'_{-2}(\R)=\emptyset$ when $T=\mathbf{XVII}$.
(While it also applies to $T \in \{\mathbf{VIII,XVI}\}$, below we give an argument which handles both cases $d=\pm 2$ uniformly.)

\smallskip

\noindent \textbf{$\Q_2$-obstruction:}

$\bullet$ For $T=\mathbf{VIII}$ with $k=2$, these homogeneous spaces specialize to  
\[
C_{\pm 2} \colon \pm 2 w^2 = 4 \mp 8 \alpha z^2 + 4p z^4
\]
where there is some ambiguity about the sign on the middle term of the RHS, but this ambiguity does not affect our argument.
We have $\ord_2(\mathrm{LHS})$ is odd, while
\[
\ord_2(\mathrm{RHS}) \geq \min \{2, 3+2j, 2+4j\}
\]
where $j=\ord_2(z)$.
Looking mod 4 shows that $w \in 2 \Z_2$, so $\ord_2(\mathrm{LHS})\geq 3$.
The only way to achieve this is if $\ord_2(\mathrm{LHS})=3$ and $j=0$.
Write $w=2W$.
Substituting gives
\[
8W^2 = 4 \mp 8 \alpha z^2 + 4pz^4
\]
and simplifying yields 
\[
2W^2 = 1 \mp 2 \alpha z^2 + pz^4.
\]
Looking mod $8$ and noting that $p \equiv 5 \pmod{8}$, we have 
\begin{align*}
2 & \equiv 1 \mp 2 \alpha + 5 \pmod 8 \textrm{ or } \\
4 & \equiv 2 \alpha \pmod 8.
\end{align*}
But $\alpha$ is an odd integer, so this is impossible, and this $\Q_2$-obstruction shows that $\pm 2 \not\in \Sel^{(\phi)}(\EC/\Q)$.
Since $p \in \Sel^{(\phi)}(\EC/\Q)$, the group structure now allows us to deduce that $\Sel^{(\phi)}(\EC/\Q) \simeq \{1, p \}$.

$\bullet$ If we consider the same homogeneous space with $T=\mathbf{XVI}$, we have 
\[
C_{\pm 2} \colon \pm 2w^2 = 4 \mp 8 \alpha z^2 + 4pz^4
\]
where there is some ambiguity on the middle sign on the RHS, but it does not affect the argument.
If we have a solution with $w,z \in \Q_2$, then $\ord_2(2w^2)$ is odd, but
\[
\ord_2(\mathrm{RHS})\geq \min \{2, 3+2j, 4+4j\}
\]
with equality unless at least two of the terms are equal.
However, for every value of $j$ these terms are distinct, and the minimum is even.
Thus, we have a $\Q_2$-obstruction, hence $\pm 2 \not\in \Sel^{(\phi)}(\EC/\Q)$.

The exact same argument (\textit{mutatis mutandis}) also shows that $C_{\pm 2p}(\Q_2)=\emptyset$ when $T=\mathbf{XVI}$.

\smallskip

Now let us consider the homogeneous space
\[
C'_{-2} \colon  - 2w^2 = 4 - 8 a z^2 -16b z^4.
\]

\smallskip 

\noindent \textbf{$\R$-obstruction:}
The left-hand side is always non-positive, while the right-hand side certainly takes positive values.
Viewed as a quadratic in $z^2$, the discriminant on the right-hand side is $64(a^2-4b)$, so it is negative precisely when $a^2-4b<0$.
This shows that $C'_{-2}(\R)=\emptyset$ when $T\in\{ \mathbf{X, XII, XVIII} \}$, so $-2 \notin \Sel^{(\phi')}(\EC'/\Q)$ for these $T$.

\smallskip 

Finally, we consider the homogeneous space $C'_2$ for two families.

$\bullet$ For $T=\mathbf{VIII}$ with $k=2$, we have
\[
C'_{2} \colon 2w^2 = 4 \pm 16 \alpha z^2 + 2^6 z^4.
\]
Once again $\ord_2(\mathrm{LHS})$ is odd, while
\[
\ord_2(\mathrm{RHS}) \geq \min \{2, 4+2j, 6+4j\}
\]
with $j=\ord_2(z)$.
The only way for this to work is if $\ord_2(w)=1$ and $j=-1$, so we write $w=2W$ and $z=\frac{1}{2}\zeta$ with $W,\zeta \in \Z_2^\times$.
Substituting gives 
\[
8W^2 = 4 \pm 4 \alpha \zeta^2 + 4 \zeta^4,
\]
and simplifying yields
\[
2W^2 = 1 \pm \alpha \zeta^2 + \zeta^4.
\]
Reducing this mod 8, we have
\[
2 \equiv 2 \pm \alpha \pmod 8,
\]
but this is impossible since $\alpha$ is an odd integer.
So $2 \not\in \Sel^{(\phi')}(\EC'/\Q)$, and the group structure allows us to deduce $-2 \not\in \Sel^{(\phi')}(\EC'/\Q)$ as well.

A similar argument (except the punchline is that $0 \equiv 5 \alpha \pmod 8$) shows that $2p \not\in \Sel^{(\phi')}(\EC'/\Q)$, and the group structure implies also $-2p \not\in \Sel^{(\phi')}(\EC'/\Q)$.

$\bullet$ Now consider the $C'_{2}$ for $T=\mathbf{XVII}$, which is
\[
2w^2 = 4 \pm 32 \alpha z^2 - 32 z^4,
\]
where $\alpha$ is odd by assumption (since $p \equiv 3 \pmod 8$ and $\alpha^2=(p-1)/2$).

\smallskip

\noindent \textbf{$\Q_2$-obstruction:} We have $\ord_2(2w^2)$ is odd, while writing $j=\ord_2(z)$, we have
\[
\ord_2(\mathrm{RHS}) \geq \min \{2, 5+2j, 5+4j\}.
\]
Since $5+4j$ is odd, this implies $w \in \Z_2$.
If $z \in \Z_2$, reducing this equation mod 4 implies $\ord_2(w)>0$, and then reducing mod 8 gives the contradiction $0 \equiv 4 \pmod 8$.
So instead we must have $j<0$, and since $w \in \Z_2$, the only possibility is $j=-1$.
Write $z=\frac{1}{2}\zeta$ with $\zeta \in \Z_2^\times$.
Substituting and simplifying gives
 \[
 w^2 = 2 \pm 4\alpha \zeta^2 - z^4,
 \]
 and reducing this mod 8 gives
 \[
 4\alpha \equiv 0 \pmod 8.
 \]
 But since our assumptions for family $T=\mathbf{XVII}$ imply that $\alpha$ is odd, we have a contradiction, hence we have a $\Q_2$-obstruction in this case.

\smallskip

\noindent \textbf{(iii)} \fbox{$d=\pm p$:}
Consider the homogeneous spaces
\[
C'_{\pm p} \colon \pm p w^2 = p^2 \pm 4p a z^2 -16b z^4.
\]

\noindent \textbf{$\Q_p$-obstruction:}

$\bullet$ First suppose that $T \in \{\mathbf{X, XIV, XVI, XVII, XIX }\}$.
We have $p \nmid ab$.
Suppose there is a solution $(w,z) \in C'_{\pm p}(\Q_p)$.
Then $\ord_p(pw^2)$ is odd and 
\[
\ord_p(\mathrm{RHS}) \geq \min \{ 2, 1+2j, 4j \}
\]
with $j = \ord_p(z)$.
Since $4j$ is even, we must have $w, z \in \Z_p$.
Reducing mod $p$ shows that $z \in p\Zp$, and then reducing mod $p^2$ shows that $w \in p\Zp$, but then this implies $p^2 \equiv 0 \pmod p^3$, a contradiction.

For the same set of $T$, if we consider the form of the homogeneous spaces $C'_{\pm 2p}$, the same argument applies again, and we deduce that $\pm p, 2p \not\in \Sel^{(\phi')}(\EC'/\Q)$ for $T \in \{\mathbf{X, XIV, XVI, XVII, XIX }\}$.

$\bullet$ Now let us consider $T=\mathbf{XII}$ with the additional hypotheses that $k=2$ and $-64$ is not a fourth power mod $p$.
We have the homogeneous space 
\[
C'_{\pm p} \colon \pm p w^2 = p^2 \pm 8p \alpha z^2 - 32 p^2 z^4.
\]
Suppose there is a solution with $w,z \in \Q_p$, and write $j=\ord_p(z)$.
Then $\ord_p(pw^2)$ is odd, while 
\[
\ord_p(\mathrm{RHS}) \geq \min \{ 2, 1+2j, 2+4j \},
\]
from which we deduce that $w, z \in \Z_p$.
Reducing mod $p^2$ gives
\[
w^2 \equiv 8 \alpha z^2 \pmod p,
\]
which implies
\begin{align*}
w^4 &\equiv 64 z^4 (2p^2-1) \pmod p \textrm{ or }\\
w^4 &\equiv -64 z^4 \pmod p,
\end{align*}
which is impossible by assumption.
So $C'_{\pm p}(\Q_p)=\emptyset$.

\smallskip

Now consider the homogeneous spaces
\[
C_{\pm p} \colon \pm pw^2 = p^2 \mp 2paz^2 + (a^2-4b)z^4.
\]

\noindent \textbf{$\R$-obstruction:}

$\bullet$
When $d=-p$, the left-hand side is always non-positive, whereas the right-hand side certainly takes positive values.
Viewing the right-hand side as a quadratic in $z^2$, its discriminant is
\[
4p^2a^2-4p^2(a^2-4b)=16b,
\]
so we have $C_{-p}(\R)=\emptyset$ when $b<0$.
This shows that $-p \notin \Sel^{(\phi)}(\EC/\Q)$ for $T \in \{\mathbf{XVI, XVII}\}$.

\smallskip

\noindent \textbf{$\Q_2$-obstruction:}

$\bullet$
Consider $T \in \{\mathbf{XVII, XIX}\}$ and suppose there is a solution $(w,z) \in C_{\pm p}(\Q_2)$.
Then $\ord_2(pw^2)$ is even, and writing $\ord_p(z)=j$ we see
\[ 
\ord_2(p^2 \mp 2paz^2 + (a^2-4b)z^4) \geq \min\{0, 3+\epsilon+2j, 3+4j\}
\]
for some nonnegative integer $\epsilon$.
Since $3+4j$ is odd, we must have $w, z \in \Z_2$.
Reducing mod $8$ then implies $w^2=p \pmod 8$, but $1$ is the only odd square mod $8$, and by assumption $p \not\equiv 1 \pmod 8$.
Thus $C_{\pm p}(\Q_2)=\emptyset$ for $T \in \{\mathbf{XVII, XVIII, XIX} \}$ under the additional hypotheses of the theorem.

\smallskip

\noindent \textbf{$\Q_p$-obstruction:}

$\bullet$
For $T\in\{ \mathbf{XII, XVIII}\}$, the homogeneous spaces are
\[
C_{\pm p} \colon \pm pw^2 = p^2 \mp 4p\alpha z^2 -4 z^4.
\]
Suppose $(w,z) \in C_{\pm p} (\Q_p)$ and let $j=\ord_p(z)$.
Then $\ord_p(pw^2)$ is odd, while
\[
\ord_p(p^2 \mp 4p \alpha z^2 - 4z^4) \geq \min\{ 2, 1 +2j, 4j \},
\]
so we must have $w, z \in \Z_p$.
Reducing mod $p$ shows $z \in p\Z_p$, after which reducing mod $p^2$ shows $w \in p\Z_p$, but then we arrive at the contradiction $p^2 \equiv 0 \pmod p^3$, so $C_{\pm p} (\Q_p)= \emptyset$.

The same argument shows $C_{\pm 2p} (\Q_p)=\emptyset$ in this case.
We conclude that $\pm p, \pm 2p \notin \Sel^{(\phi)}(\EC/\Q)$ for $T\in\{\mathbf{XII,XVIII}\}$.

\smallskip

\noindent \textbf{(iv)} \fbox{Group structure obstructions:}
Recall that our Selmer groups have been identified with a subgroup of $\Q^\times / (\Q^\times)^2$.
It is now possible to use this group structure to eliminate a few more cases, just as in the proof for $T=\mathrm{X}$.
Carrying on in this manner, we complete Table~\ref{table: 2-descent}.

There remain some undetermined cells, but nevertheless, we see that in every case we have 
\[
\dim_{\mathbb{F}_2} \Sel^{(\phi)}(\EC/\Q) + \dim_{\mathbb{F}_2} \Sel^{(\phi')}(\EC'/\Q) \leq  3.
\]
So ~\eqref{eq:selmer-rank-bound} implies that
\[
\mathrm{rank}_\Z \EC(\Q), \mathrm{rank}_\Z \EC'(\Q) \leq 1.
\qedhere
\]
\end{proof}

\section{Dictionary to go between Ivorra's paper and our classification types}
\label{appendix-dictionary}

\begin{table}[h!]
\begin{tabular}{|c||c|}
\hline
{\textbf{Type}} & {\textbf{Ivorra Theorem \# and Curve Labels}} \\ \hline
\hline

\textbf{I}          & 2A, 3A, 3B, 4D, 5B, 5B'                           \\ \hline
\textbf{II}         & 3C, 5C, 5C'                                       \\ \hline
\textbf{III}        & 4A                                                \\ \hline
\textbf{IV}         & 4B                                                \\ \hline
\textbf{V}          & 4E                                                \\ \hline
\textbf{VI}         & 5A, 5A'                                           \\ \hline
\textbf{VII}        & 6A, 6A'                                           \\ \hline
\textbf{VIII}       & 6B, 6B'                                           \\ \hline
\textbf{IX}         & 6C, 6C'                                           \\ \hline
\textbf{X}          & 6E, 6E'                                           \\ \hline
\textbf{XI}         & 7A, 7A'                                           \\ \hline
\textbf{XII}        & 7B, 7B'                                           \\ \hline
\textbf{XIII}       & 7C, 7C'                                           \\ \hline
\textbf{XIV}        & 7D, 7D'                                           \\ \hline
\textbf{XV}         & 7E, 7E'                                           \\ \hline
\textbf{XVI}        & 7F, 7F'                                           \\ \hline
\textbf{XVII}       & 8A, 8A'                                           \\ \hline
\textbf{XVIII}      & 8B, 8B'                                           \\ \hline
\textbf{XIX}        & 8C, 8C'                                           \\ \hline
\textbf{XX}         & 8D, 8D'                                           \\ \hline
\end{tabular}
\caption{A dictionary for translating between our classification types and the curves listed in Ivorra \cite{ivorra2004courbes}.}
\label{table:dictionary}
\end{table}

\newpage

\bibliographystyle{amsalpha}
\bibliography{references}

\end{document}